\numberwithin{equation}{section}
\newtheorem*{theorem*}{Theorem}
\newtheorem{theorem}{Theorem}[section]
\newtheorem{theorem/def}[theorem]{Theorem/Definition}
\newtheorem{lemma}[theorem]{Lemma}
\newtheorem{proposition}[theorem]{Proposition}
\newtheorem{conjecture}[theorem]{Conjecture}
\newtheorem{corollary}[theorem]{Corollary}
\theoremstyle{definition}
\newtheorem{definition}[theorem]{Definition}
\newtheorem{remark}[theorem]{Remark}
\newtheorem{notation}[theorem]{Notation}
\newtheorem{example}[theorem]{Example}
\newtheorem{def/prop}[theorem]{Definition/Proposition}
\newcommand{\C}{\mathbb{C}}
\newcommand{\Z}{\mathbb{Z}}
\newcommand{\K}{\mathcal{K}}
\newcommand{\cA}{\mathcal{A}}
\newcommand{\A}[2]{A_{#2 #1}}
\newcommand{\HH}
{\operatorname{HH}_0}
\newcommand{\defterm}[1]{\textbf{#1}}
\newcommand{\HHA}[2]{\HH(A,\A{#1}{#2})}
\newcommand{\SL}[1]{\operatorname{SL}_{#1}}
\newcommand{\SLZ}{\SL{2}(\Z)}
\newcommand{\GL}[1]{\operatorname{GL}_{#1}}
\newcommand{\GLN}{\GL{N}}
\newcommand{\SLN}{\SL{N}}
\newcommand{\s}[1]{\operatorname{r}_{#1}}
\newcommand{\SkMod}[1]{\operatorname{Sk}_{#1}}
\newcommand{\Sk}[1]{\SkMod{\GL{#1}}}
\newcommand{\SkAlg}[1]{\operatorname{SkAlg}_{\GL{#1}}}
\newcommand{\SkCat}[1]{\operatorname{SkCat}_{#1}}
\newcommand{\M}[3]{\Id_{2 #3}-#2 \otimes #1}
\newcommand{\Mwg}{\M{\gamma}{w}{N}}
\newcommand{\Mred}[2]{\Id-#1^{#2}}
\newcommand{\cycle}[1]{(1\cdots #1)}
\newcommand{\cycletyperep}[1]{1^{\s{1}}2^{\s{2}}\ldots #1^{\s{#1}}}
\newcommand{\cycletype}[1]{[\cycletyperep{#1}]}
\newcommand{\cyclicgr}[1]{C_{#1}}
\newcommand{\rkw}{\s{k}(w)}
\newcommand{\rjw}{\s{j}(w)}
\newcommand{\ckg}{c_k(\gamma)}
\newcommand{\ck}{c_k}
\newcommand{\quotient}[2]{{\raisebox{.0em}{$#1$}\left/\raisebox{-.15em}{$#2$}\right.}}
\newcommand{\colred}{\begin{array}{cc}
    \cong\\
    \text{col}
\end{array}}
\newcommand{\rowred}{\begin{array}{cc}
    \cong\\
    \text{row}
\end{array}}
\newcommand{\cycgp}[2]{\quotient{\langle#1\rangle}{\langle#1^{#2}\rangle}}
\newcommand{\centraliser}[2]{Z_{S_{#2}}(#1)}
\newcommand{\lattice}[1]{\left(\Z\oplus \Z\right)^{\oplus #1}}
\newcommand{\Zm}[2]{\widetilde{m}_{#1}^{#2}}
\newcommand{\SN}{S_N}
\newcommand{\Sx}[1]{S_{#1}}
\newcommand{\Repq}[1]{\operatorname{Rep}_q(#1)}
\newcommand{\powth}[1]{\operatorname{#1}^{\text{th}}}
\newcommand{\Commutator}[2]{N_{#1,#2}}
\newcommand{\dotp}[2]{{#1} \bullet {#2}}
\DeclareMathOperator{\Hom}{Hom}
\DeclareMathOperator{\End}{End}
\DeclareMathOperator{\cl}{cl}
\DeclareMathOperator{\Id}{Id}
\DeclareMathOperator{\im}{Im}
\DeclareMathOperator{\tors}{tors}
\DeclareMathOperator{\coker}{coker}
\DeclareMathOperator{\tr}{tr}
\DeclareMathOperator{\ord}{ord}
\DeclareMathOperator{\pf}{\mathcal{Z}}
\DeclareMathOperator{\SkCatA}{SkCat}
\tikzset{
  strand/.style={semithick}, 
  dstrand/.style={dotted}, 
  rdstrand/.style={semithick,dash pattern=on 0.7 off 1pt, color=red},  
  oriented/.style={postaction={decorate},
                   decoration={markings, mark=at position 0.85 with {\arrow{>}}}} 
}
\newcommand{\undercross}[2]{
  \vcenter{\hbox{\begin{tikzpicture}[scale=0.6]
    \draw[#1,oriented] (-0.6,-0.6) -- (0.6,0.6);
    \draw[white,line width=5pt] (0.6,-0.6) -- (-0.6,0.6); 
    \draw[#2,oriented] (0.6,-0.6) -- (-0.6,0.6);
  \end{tikzpicture}}}
}
\newcommand{\overcross}[2]{
  \vcenter{\hbox{\begin{tikzpicture}[scale=0.6]
    \draw[#2,oriented] (0.6,-0.6) -- (-0.6,0.6); 
    \draw[white,line width=5pt] (-0.6,-0.6) -- (0.6,0.6); 
    \draw[#1,oriented] (-0.6,-0.6) -- (0.6,0.6); 
  \end{tikzpicture}}}
}
\newcommand{\parallelstrands}[2]{
  \vcenter{\hbox{\begin{tikzpicture}[scale=0.6]
    \draw[#1,oriented] (-0.4,-0.6) -- (-0.4,0.6);
    \draw[#2,oriented] (0.4,-0.6) -- (0.4,0.6);
  \end{tikzpicture}}}
}
\newcommand{\singlestrand}[1]{
  \vcenter{\hbox{\begin{tikzpicture}[scale=0.6]
    \draw[#1,oriented] (0,-0.6) -- (0,0.6);
  \end{tikzpicture}}}
}
\newcommand{\loopstrandJ}[1]{
  \vcenter{\hbox{\begin{tikzpicture}[scale=0.6]
    \coordinate (C) at (-0.1,0);
    \coordinate (X) at (-0.1,-0.6);
    \coordinate (Y) at (-0.1,0.6);
    \coordinate (A0) at (0.1,0.1);
    \coordinate (A1) at (0.1,-0.3);
    \coordinate (B) at (0.3,-0.1);
    \draw[#1] (A1) [out=180,in=-80] to (C);
    \draw[#1,oriented] (C) [out=100,in=-90] to (Y);
    \draw[white,line width=4.5pt] (X) [out=90,in=-100] to (C);
    \draw[white,line width=4.5pt] (C) [out=80,in=90] to (B);
    \draw[white,line width=4.5pt] (B) [out=-90,in=0] to (A1);
    \draw[#1] (X) [out=90,in=-100] to (C);
    \draw[#1] (C) [out=80,in=180] to (A0);
    \draw[#1] (A0) [out=0,in=90] to (B);
    \draw[#1] (B) [out=-90,in=0] to (A1);
  \end{tikzpicture}}}
}
\newcommand{\circlestrand}[2]{
  \vcenter{\hbox{\begin{tikzpicture}[scale=0.6]
    \ifthenelse{\equal{#2}{ccw}}{
      \draw[#1,oriented] (0,0) circle (0.5);
    }{
      \draw[#1,oriented] (0.5,0) arc[start angle=0,end angle=-360,radius=0.5];
    }
  \end{tikzpicture}}}
}
\newcommand{\Nvertex}[1]{
  \vcenter{\hbox{\begin{tikzpicture}[scale=0.6]
    \tikzset{lowarrow/.style={semithick, postaction={decorate},
                               decoration={markings, mark=at position 0.25 with {\arrow{>}}}}}

    \draw[strand,lowarrow] (-1.2,-1.2) -- (0,-0.2);
    \draw[strand,lowarrow] (-0.7,-1.2) -- (0,-0.2);

    \node at (0,-1.2) {\scriptsize $\cdots$};

    \draw[strand,lowarrow] (0.7,-1.2) -- (0,-0.2);
    \draw[strand,lowarrow] (1.2,-1.2) -- (0,-0.2);

    \filldraw (0,-0.2) circle (2pt);

    \draw[rdstrand,oriented] (0,-0.2) -- (0,1.0);
  \end{tikzpicture}}}
}
\newcommand{\dualNvertex}[1]{
  \vcenter{\hbox{\begin{tikzpicture}[scale=0.6]
    \tikzset{lowarrow/.style={semithick, postaction={decorate},
                               decoration={markings, mark=at position 0.25 with {\arrow{<}}}}}

    \draw[rdstrand,oriented] (0,-1.0) -- (0,-0.2);

    \filldraw (0,-0.2) circle (2pt);

    \draw[strand,lowarrow] (-1.2,0.8) -- (0,-0.2);
    \draw[strand,lowarrow] (-0.7,0.8) -- (0,-0.2);

    \node at (0,0.8) {\scriptsize $\cdots$};

    \draw[strand,lowarrow] (0.7,0.8) -- (0,-0.2);
    \draw[strand,lowarrow] (1.2,0.8) -- (0,-0.2);
  \end{tikzpicture}}}
}
\newcommand{\NtoNvertex}[1]{
  \vcenter{\hbox{\begin{tikzpicture}[scale=0.6]
    \tikzset{
      lowarrow/.style={semithick,postaction={decorate},
                        decoration={markings, mark=at position 0.25 with {\arrow{>}}}},
      dashedarrow/.style={semithick,postaction={decorate},
                           decoration={markings, mark=at position 0.8 with {\arrow{>}}}},
      outarrow/.style={semithick,postaction={decorate},
                        decoration={markings, mark=at position 0.25 with {\arrow{<}}}}
    }

    \draw[strand,lowarrow] (-1.2,-1.5) -- (0,-0.7);
    \draw[strand,lowarrow] (-0.7,-1.5) -- (0,-0.7);
    
    \node at (0,-1.5) {\scriptsize $\cdots$};

    \draw[strand,lowarrow] (0.7,-1.5) -- (0,-0.7);
    \draw[strand,lowarrow] (1.2,-1.5) -- (0,-0.7);

    \filldraw (0,-0.7) circle (2pt);

    \draw[rdstrand,dashedarrow] (0,-0.7) -- (0,0.0);

    \filldraw (0,0.0) circle (2pt);

    \draw[strand,outarrow] (-1.2,0.8) -- (0,0.0);
    \draw[strand,outarrow] (-0.7,0.8) -- (0,0.0);

    \node at (0,0.8) {\scriptsize $\cdots$};

    \draw[strand,outarrow] (0.7,0.8) -- (0,0.0);
    \draw[strand,outarrow] (1.2,0.8) -- (0,0.0);
  \end{tikzpicture}}}
}
\newcommand{\Nbox}[1]{
  \vcenter{\hbox{\begin{tikzpicture}[scale=0.6]
  
    \tikzset{lowarrow/.style={semithick, postaction=
                               {decorate},
                               decoration={markings, mark=at position 0.6 with {\arrow{>}}}}}

    \draw[strand,lowarrow] (-1.2,-1.5) -- (-1.2,-0.5);
    \draw[strand,lowarrow] (-0.8,-1.5) -- (-0.8,-0.5);
    \node at (0,-1) {\scriptsize $\cdots$};
    \draw[strand,lowarrow] (0.8,-1.5) -- (0.8,-0.5);
    \draw[strand,lowarrow] (1.2,-1.5) -- (1.2,-0.5);

    \draw[thick,fill=white] (-1.4,-0.5) rectangle (1.4,0.5);
    \node at (0,0) {$#1$};

    \draw[strand,lowarrow] (-1.2,0.5) -- (-1.2,1.5);
    \draw[strand,lowarrow] (-0.8,0.5) -- (-0.8,1.5);
    \node at (0,1) {\scriptsize $\cdots$};
    \draw[strand,lowarrow] (0.8,0.5) -- (0.8,1.5);
    \draw[strand,lowarrow] (1.2,0.5) -- (1.2,1.5);
  \end{tikzpicture}}}
}
\title{The skein partition function of the mapping torus}
\author{Julia Bierent, David Jordan, Matthias Vancraeynest, Monica Vazirani}
\date{\today}
\begin{document}

\maketitle

\begin{abstract}
We compute the dimensions of $\GLN$-skein modules of genus-one mapping tori $T^2\times_\gamma S^1$, for an arbitrary diffeomorphism of $T^2$, and for generic quantum parameter.  These are most cleanly expressed via a generating function over all $N$, which we dub the skein partition function, and for which we compute an explicit Euler product expansion.
\end{abstract}
\tableofcontents

\section{Introduction}

The $G$-skein module of a closed, oriented $3$-manifold $M$, denoted by $\SkMod{G}(M)$ is a vector space formed as the span of framed labelled graphs in $M$, modulo local \emph{skein relations}.  The labels, and the skein relations, are modelled on the category $\Repq{G}$ of finite dimensional modules of the quantum group associated to the reductive algebraic group $G$. It was conjectured by Witten, and then proved in \cite{GJS23}, that the skein module is finite dimensional for generic values of the quantum parameter $q$.

To date, rather few skein module dimensions are known and it is an important question to compute more examples and find general patterns and enumerative formulas.  The case $G= \GL{1}=\mathbb{C}^\times$ is known for any closed 3-manifold thanks to \cite{Prz98}.  In the case $G=\SL{2}$, the product manifolds $M=\Sigma_g\times S^1$  and the mapping tori $M_\gamma=T^2\times_\gamma S^1$ for $\gamma\in\SL{2}(\mathbb{Z})$ are known thanks to \cite{DW21} and \cite{Kinnear24}, respectively.  Moving beyond $\mathbb{C}^\times$ and $\SL{2}$, the only known dimensions were for $G=\SL{N},\GL{N}$ and for $M=T^3$ thanks to \cite{GJV23,GJV24}.  Here we extend to the cases $G=\GLN$ and $M=M_\gamma$. 

This paper accomplishes two goals: the first is to reframe the problem of computing $\GLN$-skein module dimensions at generic $q$ using generating functions, and the second is to compute those generating functions in the case of mapping tori in genus one. 

\subsection{The skein partition function}

In this paper we determine the dimension of the $\GL{N}$-skein module  of the mapping torus $M_\gamma=T^2\times_\gamma S^1$, for generic quantum parameter $q$, for any $\gamma\in\SL{2}(\Z)$ and for all $N\ge 1$. We collect the dimensions over all $N$ together into a single generating function, which we call the \emph{skein partition function}.

\begin{definition}
    The \defterm{skein partition function} of a closed oriented 3-manifold $M$ is the generating function,
    \begin{gather} \label{eq:ZM}
    \pf_M(t) = 1 + \sum_{N=1}^\infty \dim\Sk{N}(M)\cdot t^N,
       \end{gather}
of the $\GLN$-skein modules of $M$ at generic quantum parameter $q$ for all $N\geq 1$.
\end{definition}
Recall that any monic integer power series -- i.e. one of the form $f(t) = 1 + \sum_{k=1}^\infty a_k t^k$, with $a_k\in\Z$ -- admits an Euler product expansion of the form,
\[
f(t) = \prod_{k=1}^\infty(1-t^k)^{-c_k},
\]
for some sequence of integers $c_k\in\mathbb{Z}$, determined uniquely  from the $a_k$ via recursion.   Our main result is a computation of the skein partition of mapping tori via determination of its Euler expansion.  This in particular gives simple formulas for the $\GLN$-skein dimensions of mapping tori for all $N$.

\begin{theorem}\label{main result}
    Let $\gamma \in \SLZ$. The skein partition function $\pf_{M_\gamma}(t)$ of the mapping torus $M_\gamma$ admits an Euler product expansion,
    \begin{align*}
        \pf_{M_\gamma}(t) = \prod_{k=1}^\infty (1-t^k)^{-c_k(\gamma)},
    \end{align*}
    where the $c_k(\gamma)$ are all positive  integers determined by the trace $\tr(\gamma)$.
More precisely, we have:
\begin{equation}
        c_k(\gamma) = \frac{1}{k}\sum_{ d|k}  \phi\left(\frac{k}{d}\right) |\tr(\gamma^d) -2|,\label{eqn:formula-generic}
\end{equation}
in the  case that $\tr(\gamma)\neq\pm 2$, or $\gamma^k \neq \Id_2$.  
The remaining special cases occur when $\gamma$ is a conjugate of the shear $T^m$, its negative $-T^m$, or when $\gamma^k = \Id_2$.  In those cases, we have:

\begin{equation*}
    c_k (\gamma)= \begin{cases}
        1,& \text{if } \gamma^k = \Id_2, \\ 
        |m|k,& \text{if $\gamma$ is conjugate to $T^m$ for $m\neq 0$,}\\
        |m|\frac{k}{2}+1, & \text{if $\gamma$ is conjugate to $-T^m$ for $m\neq 0$, and $k$ is even,}\\
        4, & \text{if $\gamma$ is conjugate to $-T^m$ and $k$ is odd.}       

    \end{cases}
\end{equation*}
\end{theorem}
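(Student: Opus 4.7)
The plan is to reduce $\Sk{N}(M_\gamma)$ to an orbit-count on the character torus $(\C^\times)^2$ via twisted Hochschild homology, and then to repackage the resulting sum over cycle types of permutations into an Euler product.

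\textbf{Reduction to twisted Hochschild homology.} I would first identify the mapping-torus skein module with the $\gamma$-twisted $\HH$ of the two-torus skein algebra,
\[
\Sk{N}(M_\gamma) \;\cong\; \HH(\SkAlg{N}(T^2),\, A_\gamma),
\]
where $A_\gamma$ denotes $A=\SkAlg{N}(T^2)$ as a bimodule with right action twisted by $\gamma$. This is standard from the perspective of $\Sk{N}(-)$ as a categorical trace. At generic $q$ the $\GLN$-skein algebra of $T^2$ admits a smash-product description
\[
\SkAlg{N}(T^2) \;\simeq\; \mathcal{O}((\C^\times)^{2N}) \rtimes \SN,
\]
with $\SN$ permuting the $N$ pairs of coordinates on $(T^2)^N=(\C^\times)^{2N}$ and $\gamma\in\SLZ$ acting diagonally on each $(\C^\times)^2$. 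The standard decomposition of twisted Hochschild homology of a smash product then gives
\[
\HH(\SkAlg{N}(T^2),\, A_\gamma) \;\cong\; \bigoplus_{[w]} \bigl(\mathcal{O}((\C^\times)^{2N})/(1-\gamma w)\bigr)^{\centraliser{w}{N}},
\]
with the sum running over conjugacy classes $[w]$ of $\SN$; each summand counts $\centraliser{w}{N}$-orbits on the fixed locus of $\gamma w$ acting on $(\C^\times)^{2N}$.

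\textbf{Per-cycle evaluation.} Decomposing $w$ into cycles splits $\Z^{2N}=(\Z^2)^{\oplus N}$ into blocks indexed by cycles; on a single $k$-cycle block, $\gamma w$ acts as $\gamma\otimes\cycle{k}$ on $\Z^{2k}$. An eigenvalue factorization over the $k$-th roots of unity, using $\det(\gamma)=1$, gives
\[
\det(\M{\cycle{k}}{\gamma}{k}) \;=\; \prod_{\zeta^k=1}\det(\Id_2-\zeta\gamma) \;=\; (1-\lambda_1^k)(1-\lambda_2^k) \;=\; 2-\tr(\gamma^k),
\]
so the fixed locus on a single $k$-cycle block has $|\tr(\gamma^k)-2|$ points in the generic case. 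Taking invariants under the residual cyclic symmetry $\Z/k$ of the $k$-cycle, Burnside's lemma produces the number of $\gamma$-orbits on $\mathrm{Fix}(\gamma^k)\subset T^2$, namely
\[
c_k(\gamma) \;=\; \frac{1}{k}\sum_{d\mid k}\phi\!\left(\frac{k}{d}\right)|\tr(\gamma^d)-2|.
\]
For $w$ with cycle type $\cycletype{N}$, the remaining factor $\prod_k S_{\s{k}}$ of $\centraliser{w}{N}$ permutes the $\s{k}$ blocks of size $k$, converting the per-block count into the number of size-$\s{k}$ multi-subsets of $c_k(\gamma)$ orbits, i.e.\ $\binom{c_k(\gamma)+\s{k}-1}{\s{k}}$.

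\textbf{Assembly into Euler product.} Summing over cycle types with $\sum_k k\,\s{k}=N$ and passing to the generating function yields
\[
\pf_{M_\gamma}(t) \;=\; \prod_{k=1}^\infty \sum_{r\ge 0}\binom{c_k(\gamma)+r-1}{r}t^{kr} \;=\; \prod_{k=1}^\infty (1-t^k)^{-c_k(\gamma)},
\]
via the binomial series identity. The main obstacle is the degenerate regime where $\det(\M{\cycle{k}}{\gamma}{k})=0$: this occurs precisely when $\gamma^k=\Id_2$ or $\gamma$ is conjugate to $\pm T^m$, in which case the fixed locus of $\gamma w$ on $(\C^\times)^{2N}$ is positive-dimensional, the generic count $|\tr(\gamma^k)-2|$ vanishes, and the cokernel $\Z^{2k}/(1-\gamma\otimes\cycle{k})\Z^{2k}$ must be computed directly via its Smith normal form and then averaged over the $\Z/k$ and $S_{\s{k}}$ invariances. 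This direct piecewise analysis produces the shear and torsion case formulas for $c_k(\gamma)$ in the statement.
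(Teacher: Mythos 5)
Your overall architecture — twisted Hochschild homology of the torus skein algebra, decomposition over conjugacy classes of $\SN$, per-cycle reduction to $\Id-\gamma^k$, Burnside's lemma, and reassembly into an Euler product — matches the paper's. But there is a genuine gap at the foundational step. At generic $q$ the skein algebra of $T^2$ is \emph{not} $\mathcal{O}((\C^\times)^{2N})\rtimes \SN$: the correct statement (from \cite{GJV23}) is $\SkAlg{N}(T^2)\cong (A_N)^{\SN}$, where $A_N$ is the \emph{quantum} torus, a noncommutative twisted group algebra of $\lattice{N}$, and one passes to the smash product $A_N\#\SN$ only through a Morita equivalence. This is not a cosmetic difference: the quantum parameter is exactly what makes the twisted $\HH$ of each summand equal to $\K[\coker(\Mwg)_{\tors}]$, i.e.\ the group algebra of the \emph{torsion part} of the lattice cokernel (after the $q$-rescaling $\widetilde{m}_{\mathbf{v}}$ of generators). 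In your classical model the summand for $w$ would be functions on the fixed locus of $\gamma w$ in $(\C^\times)^{2N}$, which in the degenerate cases ($\gamma^k=\Id$, $\gamma$ conjugate to $\pm T^m$) is positive-dimensional, so your orbit count would be infinite and the stated finite values $1$, $|m|k$, $|m|\tfrac{k}{2}+1$, $4$ cannot be recovered. Your fallback — ``compute the cokernel $\Z^{2k}/(1-\gamma\otimes\cycle{k})\Z^{2k}$ by Smith normal form and average'' — silently switches to the quantum answer without justifying why only the torsion subgroup contributes; that justification is precisely the missing quantum input, and it is also where the hardest case analysis lives (e.g.\ for $-T^m$ with $k$ even one must identify the residual $\cyclicgr{k}$-action as negation on a cyclic group of order $|m|k$, giving two singleton orbits and $|m|\tfrac{k}{2}-1$ doubletons).

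Two smaller points. First, you assert without argument that the residual $\Z/k$ acts on the per-cycle fixed set/cokernel as multiplication by $\gamma$, and that the fixed points of $\gamma^p$ inside $\coker(\Mred{\gamma}{k})_{\tors}$ number $|\tr(\gamma^d)-2|$ with $d=\gcd(k,p)$; these are the contents of the paper's Lemmas on the centraliser action and on fixed-point counts (the latter needs a lattice-index argument comparing $\im(\Mred{\gamma}{d})\supseteq\im(\Mred{\gamma}{k})$, and a small adjustment $d=\gcd(k,p,m)$ when $\gamma$ has finite order $m$). They are true and provable, but in the nondegenerate case they carry real content and should not be folded into ``Burnside's lemma produces the number of $\gamma$-orbits.'' Second, your determinant factorization $\det(\M{\cycle{k}}{\gamma}{k})=2-\tr(\gamma^k)$ and the final generating-function manipulation are fine and agree with the paper. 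In summary: repair the setup by working with the quantum torus and the identification $\HHA{\gamma}{w}\cong\K[\coker(\Mwg)_{\tors}]$, and supply the degenerate-case orbit analysis explicitly; as written, the special-case formulas are asserted rather than proved, and the classical model you start from would actually contradict them.
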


 We prove Theorem \ref{main result} as Corollary \ref{cor:main} below.
 
\begin{remark} \label{rem:Chebyshev}
    We note that the trace $\tr(\gamma^d)$ can be expressed as a polynomial of degree $d$ in $\tr(\gamma)$,
    \[\tr(\gamma^d)=T_d(\tr(\gamma)),\]
    where $T_d(t)\in\Z[t]$ are the first order normalised Chebyshev polynomials. Hence, the exponents $c_k(\gamma)$ asserted in Formula \eqref{eqn:formula-generic} are easily expressed as functions of $\tr(\gamma)$. To illustrate this, we write out $c_k = c_k(\gamma)$ for small values of $k$, as a polynomial in $x:= \tr(\gamma)$ in the case when $\tr(\gamma) >2$. 
    \begin{align*}
        c_1(x) &= x-2,\\
        c_2(x) &= 1/2x^2 + 1/2x - 3 = \frac12 (x^2+x-6),\\
        c_3(x) &= 1/3x^3 - 1/3x - 2 = \frac13(x^3-x-6),\\
        c_4(x) &= 1/4x^4 - 3/4x^2 + 1/2x - 2 = \frac14(x^4-3x^2+2x-8).
    \end{align*}
    
\end{remark}

We give some examples of the skein partition function and its Euler expansion, respectively, for various $\gamma \in \SLZ$ below.

\begin{example} \label{ex:T}
For the shear $ T=\begin{pmatrix}
    1&1\\
    0&1
\end{pmatrix}$, we have:
\begin{align*}
\pf_{M_T}(t) &= 1+ 3t + 6t^2+ 13t^3 +24 t^4 +48 t^5 + 86t^6 + 160 t^7 + 282 t^8 + \cdots
\\
&= \prod_{k=1}^\infty (1-t^k)^{-k}.    
\end{align*}
\end{example}
\begin{example} \label{ex:-T}
For its negative $\gamma=-T$, we have:
\begin{align*}\pf_{M_{-T}}(t) &=1+ 4t +12t^2+ 32t^3 +77 t^4 +172 t^5 + 366t^6 + 744 t^7 + 1460 t^8 +\cdots\\ 
&= \left(\prod_{j=0}^\infty (1-t^{2j+1})^{-4}\right) \left(\prod_{i=1}^\infty (1-t^{2i})^{(-i-1)}\right).\end{align*} 
\end{example}

\begin{example} \label{ex:-I}
For the order two matrix $\gamma=-\Id$, we have:
\begin{align*}
\pf_{M_{-\Id}}(t) &=1+ 4t + 11t^2+ 28t^3 +63 t^4 +132 t^5 + 264 t^6 + 504 t^7 + 928 t^8 +\cdots\\
&= \left(\prod_{j=0}^\infty (1-t^{2j+1})^{-4}\right) \left(\prod_{i=1}^\infty (1-t^{2i})^{-1}\right).
\end{align*}

\end{example}

\begin{example}  \label{ex:hyperbolic}
For the (arbitrarily chosen) hyperbolic element $\gamma =\begin{pmatrix}2 & 1\\[2pt] 3 & 2\end{pmatrix}$, or indeed any $\gamma \in \SLZ$ with $\tr(\gamma)=4$, we have:
\begin{align*}
    \pf_{M_{\gamma}}(t) &= 1+ 2t + 10t^2+ 36t^3 + 142 t^4 +520 t^5 +1980 t^6 + 7344 t^7 + 27550 t^8 \cdots\\
    &=  {(1-t)^{-2} (1-t^2)^{-7}(1-t^3)^{-18}(1-t^4)^{-52}(1-t^5)^{-146}(1-t^6)^{-463}} \cdots.
\end{align*}
\end{example}
\begin{remark}
    \label{rem:plane partition}
For $\gamma=T$, see Example \ref{ex:T}, the dimension of the $\GL{N}$-skein module equals the number of plane partitions of $N$. Its generating function was first studied by MacMahon in \cite{Mac16}. 
\end{remark}
\begin{remark}\label{rem:teenage mutant ninja turtles}
    As we see, comparing $\gamma$ to $-\gamma$ in Examples \ref{ex:T} and \ref{ex:-T}, $\GLN$ skein modules detect mutant manifolds, even for $N=2$.
\end{remark}

\subsection{Positivity and relation to Donaldson-Thomas invariants}

It is a special property of a monic integer power series when all exponents $c_k$ in its Euler expansion are non-negative.  This property is referred to generally as \emph{positivity}.  The Hilbert series of any symmetric algebra $S(V)$ of a finitely graded vector space $V$ exhibits such positivity, and conversely any monic integer power series exhibiting positivity is equal to the Hilbert series of some such $S(V)$.  Our results therefore situate the skein partition function in this class of generating functions, and prompt the question: \emph{is there an intrinsic construction of such a graded vector space $V$?}

A potential answer may be found by through a deep, as yet conjectural, connection between skein modules and cohomological Donaldson-Thomas (DT) invariants.  Both skein modules and cohomological DT invariants can be viewed as defining deformation quantisations of the representation variety, and of the character stack of $M$, each equipped with a canonical (-1)-shifted symplectic structure.  In \cite{GJS23,GS23}, these two quantisations were conjectured to bear a close relation.  Namely, the cohomological DT invariant defines a perverse sheaf of ``vanishing cycles" on the representation variety, whose (properly normalised, underived) global sections should coincide with the skein module and whose higher global sections should coincide with a yet-to-be-defined ``derived skein module".  This relation has been discussed subsequently in \cite{BDINKP25,Kau24}, however as yet there are no applications, owing in part to the present lack of a self-contained definition of derived skein modules.

Cohomological DT invariants admit two-variable partition functions -- two-variable generating functions in which variable degrees capture the $\GLN$-degree as above and the cohomological degree, respectively.  Positivity for cohomological DT invariants has been established in \cite{davison-mandel, davison2020cohomological,Efimov2012} following \cite{hausel2013positivity}, and there is moreover an intrinsic construction of the graded vector space $V$ as a super-Lie algebra, the so-called BPS cohomology.

Hence the positivity property of the skein partition functions computed in this paper is consistent with a conjectural DT-skein correspondence.  Leveraging the conjectured correspondence further, we may make the following conjecture:

\begin{conjecture}
The skein partition function of any closed oriented 3-manifold admits an Euler expansion with non-negative exponents $c_k$.
\end{conjecture}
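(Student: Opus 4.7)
The plan is to deduce positivity from the conjectural DT--skein correspondence of \cite{GJS23, GS23}. The idea is to construct, for each closed oriented $3$-manifold $M$, a $\Z_{\ge 1}$-graded super-vector space $V_M=\bigoplus_{k\ge 1} V_{M,k}$ with $\dim V_{M,k}=c_k$, such that $\pf_M(t)$ is realised as the Hilbert series of the symmetric algebra on $V_M$; this makes non-negativity of the exponents automatic. The natural candidate for $V_M$ is the BPS cohomology of the $\GLN$-character stack of $M$: on the DT side such a super-Lie algebra has been constructed (see \cite{davison-mandel, davison2020cohomological, Efimov2012}, following \cite{hausel2013positivity}), and positivity is an intrinsic consequence of its existence. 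If the skein module can moreover be identified with the underived global sections of the corresponding vanishing-cycles sheaf, the positivity statement transfers to $\pf_M(t)$.

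Concretely, I would proceed in three steps. First, give a self-contained construction of a \emph{derived skein module} $\SkMod{\GLN}^\bullet(M)$ enhancing $\SkMod{\GLN}(M)$ in cohomological degree zero, for which an Euler characteristic is defined and equals $\dim\SkMod{\GLN}(M)$ by the finiteness results of \cite{GJS23}. Second, establish the conjectured identification of this derived object with the DT vanishing-cycles complex on the $\GLN$-character variety of $M$, compatibly with the canonical $(-1)$-shifted symplectic structure. Third, import the BPS super-Lie algebra $V_M$ from the DT side and read off the Euler product $\pf_M(t)=\prod_k(1-t^k)^{-\dim V_{M,k}}$, which manifestly has non-negative exponents. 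One must additionally check that the $N$-grading used to define $\pf_M$ corresponds to the natural rank grading appearing in the Davison--Meinhardt-type partition function on the DT side.

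The principal obstacle, as the excerpt itself flags, is step one: there is currently no rigorous definition of a derived skein module, and without it even the target statement of the DT--skein correspondence is only heuristic. Step two would then demand a sheaf-theoretic description of $\SkMod{\GLN}(M)$ extending the few cases where underived comparisons are known (e.g.\ $T^3$ in \cite{GJV23, GJV24} and the $M_\gamma$ of the present paper) to arbitrary closed oriented $3$-manifolds, likely requiring substantial new input from shifted symplectic geometry and factorisation homology. A more modest fallback would be to attempt positivity directly, manifold by manifold, using Heegaard splittings or surgery presentations; but at present no gluing formula or TQFT-type functoriality for $\pf_M(t)$ is known that would make such an inductive attack systematic, so this too reduces to conjectural structural input.
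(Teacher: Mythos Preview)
The statement you are attempting is labelled a \emph{Conjecture} in the paper, and the paper offers no proof of it. What you have written is not a proof either: it is an outline of a research programme whose principal steps (a definition of derived skein modules, and the identification with the DT vanishing-cycles complex) are themselves open problems, as you yourself flag. The approach you sketch --- deducing positivity from the conjectural DT--skein correspondence by transporting the BPS super-Lie algebra --- is exactly the heuristic the paper gives for formulating the conjecture in the first place; so your proposal and the paper's discussion coincide in content and in status. There is therefore no proof to compare, and the ``gap'' is simply that the conjecture remains open.
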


We moreover expect that the derived skein partition function exhibits positivity, and carries an algebra structure induced by parabolic induction and restriction domain walls between $\GLN$ skein theories.  We expect an isomorphism between this algebra and the symmetric algebra of the BPS cohomology.

\begin{remark}
It may appear natural to define skein partition function for other series of groups, e.g. for other of the classical series as a generating function encoding the skein module over all ranks.  Although we do expect parabolic induction and restriction to play a role in connecting skein theories in other types, we note that this will \emph{not} imply positivity of the associated generating functions. This stems from the rather more intricate structure of Levi subgroups of classical groups outside the $\GLN$ series, for which the only Levi subgroups are products $\GL{\lambda}:=\prod_k \GL{\lambda_k}$ for compositions $\lambda \vDash N$. For instance, one may easily compute from the formulas in \cite{GJV24} that the generating function encoding $\SLN$-skein module dimensions of the torus $T^3$ does not exhibit positivity.  Indeed, the appearance of the convolution in the dimension formulas in \cite{GJV24} simply reflects the divisor structure which appears in the Levi lattice of $\SLN$.
\end{remark}
\subsection{Outline of the paper}
In Section \ref{sec:preliminaries}  we set up notation and review necessary background, such as the definition of the $\GLN$ skein category, algebra, and module in Section \ref{sec:GLN skein}.  In Section \ref{sec:Sn} we set up our notation for the symmetric group.
In Section \ref{sec:hochschild} 
we explain how we will compute the dimension of the $\GLN$ skein module of the mapping torus, whose definition is given in \ref{sec: mapping torus}, via computing coinvariants of  twisted Hochschild homology, and
how we reduce that to a computation involving linear algebra and finitely generated abelian groups. 

Section \ref{sec:dimension of the skein module} contains the main proofs and computations.  In Section \ref{sec:Cokernel} we identify the  Hochschild homology   twisted  by a permutation as a tensor product over that of its disjoint cycles.
In Section \ref{sec:Coinvariants for k-cycle} we prove our main Theorem \ref{main result} as Corollary \ref{cor:main}. We explain how the action of a centraliser subgroup of $\SN$ interacts with  our previous computational reductions, and we give formulas for the dimension of the coinvariant space of the twisted Hochschild homology via orbit counting. In particular, we reduce our computations to understanding the case of a single $k$-cycle in $\Sx{k}$ -- this reduction to $k$-cycles is ultimately what accounts for the determination of the Euler exponents $\ckg$, and in particular their positivity. 

\subsection{Acknowledgements}
We are grateful to Jennifer Brown and Kevin Walker for helpful conversations throughout this project.  We are also grateful to Ben Davison, Sam Gunningham and Pavel Safronov for explaining aspects of the skeins-to-DT correspondence, which greatly motivated and structured our work.

The work of David Jordan was supported by the Simons Foundation award 888988 as part of the Simons Collaboration on Global Categorical Symmetry, and by the EPSRC Open Fellowship ``Complex Quantum Topology", grant number EP/Y008812/1. The work of Monica Vazirani was partially supported by Simons Foundation Collaboration Grant  707426.

\section{Preliminaries}
\label{sec:preliminaries}
\subsection{\texorpdfstring{$\GL{N}$}{GLN} skein theory} \label{sec:GLN skein}
Throughout the paper, $\K$ denotes the field $\C(q)$ -- equivalently we will always take the parameter $q$ to be generic. Skein theory can be developed for arbitrary ribbon categories. One can recover the following definitions, when considering the ribbon category $\Repq{\GLN}$. For a discussion of the general theory, we refer the interested reader to \cite{Tur94}. First, we introduce the notion of a $\GLN$-skein. 
\begin{definition}
Let $M$ be an oriented 3-manifold.  We define a \defterm{basic $\GLN$-skein} in $M$ to be an oriented, embedded ribbon graph, where the edges are either solid (black) or dashed ({{\color{red} red}}) and where each vertex is one of the following $(N+1)$-valent vertices
\begin{align*}
    \Nvertex{N},\quad \dualNvertex{N}.
\end{align*}
\end{definition}
When we draw $\GLN$-skeins on the page and without indicating a framing, we assume blackboard framing.  We note that the empty graph is allowed, and will be equivalent modulo skein relations to any graph labelled entirely by the unit object. We will denote by $\varnothing$ the empty skein.  Let us denote the quantum integers and quantum factorials by
\[[N] = \frac{q^N-q^{-N}}{q-q^{-1}},\qquad [N]! = [N]\cdot[N-1]\cdots [1].
\]
\begin{proposition}

Let $M$ be an oriented, compact $3$-manifold. The \defterm{$\GLN$-skein module} $\Sk{N}(M)$, is defined as the  $\K$-span of the isotopy classes of basic $\GLN$-skeins embedded in $M$, modulo the following skein relations

\begin{gather} 
    \overcross{strand}{strand}
  \;-\;
  \undercross{strand}{strand}
  \;=\; \bigl(q-q^{-1}\bigr)\;
  \parallelstrands{strand}{strand}, \qquad  q^{-N}\overcross{rdstrand}{rdstrand}
  \;=\; \parallelstrands{rdstrand}{rdstrand} \;=\;
  q^{N}\undercross{rdstrand}{rdstrand}, \notag \\  \notag
  \circlestrand{strand}{ccw} = [N] \, \varnothing, \qquad \loopstrandJ{strand} = q^N \singlestrand{strand}, \qquad \circlestrand{rdstrand}{ccw} =  \varnothing, \qquad  \loopstrandJ{rdstrand} = q^{N} \singlestrand{rdstrand},\\\overcross{strand}{rdstrand}
   \;=\;
  q^{2}\undercross{strand}{rdstrand}, \qquad \overcross{rdstrand}{strand}
 \;=\;
  q^{2}\undercross{rdstrand}{strand},\label{skein relations}
\\ \notag
  \NtoNvertex{N} = \quad \frac{q^{\binom{N}{2}}}{[N]!} \sum_{\sigma \in \SN} (-q)^{-\ell(\sigma)} \, \Nbox{\sigma}, \notag
\end{gather}
   applied in embedded cylinders $D^2\times I\hookrightarrow M$.  
\end{proposition}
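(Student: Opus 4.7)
This statement packages a definition with an implicit claim: that the presentation by solid/dashed strands with $(N+1)$-valent vertices and the displayed local relations recovers the skein module of the ribbon category $\Repq{\GLN}$ in the sense of \cite{Tur94}. My plan is to prove both halves at once by matching the listed generators and relations to the standard web presentation of $\Repq{\GLN}$, and then localising that presentation inside embedded cylinders.

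First I would interpret the generators. A solid oriented strand is assigned the defining representation $V = \C^N$, and a dashed strand is assigned the quantum determinant line $\det_q V = \Lambda^N_q V$. The two $(N+1)$-valent vertices correspond to the natural morphisms $V^{\otimes N} \to \det_q V$ and $\det_q V \to V^{\otimes N}$ produced by the quantum antisymmetriser and its adjoint. Together with the ribbon (braiding, twist, duality) structure on $\Repq{\GLN}$, this assigns every basic $\GLN$-skein a morphism in $\Repq{\GLN}$. Using the isomorphism $V^* \cong \Lambda^{N-1}_q V \otimes (\det_q V)^{-1}$, any strand labelled by $V^*$, and more generally any summand of $V^{\otimes a}\otimes (V^*)^{\otimes b}$, is realised graphically by a configuration of solid strands meeting a dashed vertex. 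Consequently the $\K$-span of basic $\GLN$-skeins maps surjectively onto the space of $\Repq{\GLN}$-labelled ribbon graphs up to isotopy.

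Next I would verify each listed skein relation as a true identity in $\Repq{\GLN}$. The Hecke-type first relation follows from the quadratic minimal polynomial $(R_{V,V} - q)(R_{V,V} + q^{-1}) = 0$ of the braiding on $V \otimes V$. The three scalar crossing identities compute the braidings $R_{V,\det_q V}$, $R_{\det_q V,V}$ and $R_{\det_q V,\det_q V}$, each of which is scalar by Schur's lemma since $\det_q V$ is one-dimensional and invertible. The closed-loop identities record the quantum dimensions $\dim_q V = [N]$ and $\dim_q \det_q V = 1$; the two kink identities record the ribbon twists on $V$ and on $\det_q V$; and the last equation is the standard quantum Schur-Weyl identity expressing the composition of the two $(N+1)$-valent vertices as the quantum antisymmetriser idempotent in $V^{\otimes N}$, expanded in terms of positive Hecke braid lifts with weights $(-q)^{-\ell(\sigma)}$ and overall normalisation $q^{\binom{N}{2}}/[N]!$.

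The substantive content, and the main obstacle, is completeness: the listed local relations must generate every relation holding between labelled ribbon graphs in $\Repq{\GLN}$. For this I would invoke the web presentation of $\Repq{\GLN}$ developed by Murakami-Ohtsuki-Yamada and refined in the Cautis-Kamnitzer-Morrison spider framework, which asserts precisely that webs built from $V$- and $\det_q V$-strands joined at the stated vertices, subject to exactly the relations above, provide a full and faithful model of the relevant subcategory of $\Repq{\GLN}$. Localising this presentation inside the embedded cylinders $D^2 \times I \hookrightarrow M$ and invoking the general construction of the skein module from a ribbon category as in \cite{Tur94} then identifies the $\K$-span of basic $\GLN$-skeins modulo the displayed relations with $\Sk{N}(M)$, completing the proof.
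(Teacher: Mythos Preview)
The paper does not actually supply a proof of this proposition; it treats the statement as a definition and immediately follows it with a remark asserting that the resulting module ``can easily be shown to be equivalent to'' the $\Repq{\GLN}$ skein module in the sense of \cite{Tur94} and to the evident $\GLN$-modification of the $\SLN$ web/spider formalism of Sikora and Cautis--Kamnitzer--Morrison. Your proposal is exactly an expansion of that remark: you identify the solid and dashed strands with $V$ and $\det_q V$, check the listed relations against the known braiding, twist, and quantum-dimension data, and then appeal to the CKM web presentation for completeness before localising in cylinders. So your approach is not merely consistent with the paper's---it is the argument the paper gestures at but declines to write out.

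One small caution: the full CKM spider has trivalent vertices for all fundamental weights $\Lambda^a_q V$, whereas here you are working only with $V$ and $\det_q V$ and the single $(N{+}1)$-valent antisymmetriser vertex. Your claim that ``any summand of $V^{\otimes a}\otimes (V^*)^{\otimes b}$ is realised graphically'' is correct at the level of objects after idempotent completion, but to literally deduce completeness of \emph{this} reduced generating set from the CKM result you should say a word about why no further relations among these particular generators are needed beyond those listed (e.g.\ that the intermediate $\Lambda^a_q V$ strands and their square/digon relations can be eliminated in favour of the antisymmetriser). This is routine but not entirely vacuous, and the paper sidesteps it by simply citing the references.
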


\begin{remark}
The $\GLN$-skein module defined here can easily be shown to be equivalent to: (a) the special case $\mathcal{A}=\Repq{\GLN}$ of the general definition as in \cite{Tur94}, and (b) the evident $\GLN$-modification of $\SLN$-webs from \cite{Sik05}, and $\SLN$-spiders from \cite{CKM}. The solid black line represents the defining representation, and the {{\color{red} red}} dashed line represents its $\powth{N}$ quantum exterior representation, the quantum determinant.
\end{remark}
\begin{definition}
Let $\Sigma$ be an oriented, compact surface. The \defterm{$\GLN$-skein category of $\Sigma$}, denoted $\SkCatA_{\GLN}(\Sigma)$ is the $\K$-linear category with
    \begin{itemize}[label=-]
        \item objects: finite unions of oriented, framed points on $\Sigma$, coloured by either ``black solid" or ``{{\color{red} red }} dashed". This data is also called a labeling of $\Sigma$.
        \item morphisms space: the $\K$-vector space spanned by isotopy classes of $\GLN$-skeins in $M= \Sigma\times I$ compatible with the labeling of the source and target objects, modulo the skein relations \eqref{skein relations} applied in embedded cylinders $D^2\times I\hookrightarrow \Sigma \times I$.  
    \end{itemize}
    The composition is defined by stacking in the $I$-direction.
\end{definition}
\begin{definition}
    The endomorphism algebra $\End_{\SkCatA_{\GLN}(\Sigma)}(\varnothing)$ for the empty labeling $\varnothing$, is called the \defterm{$\GLN$-skein algebra of $\Sigma$} and is denoted by $\SkAlg{N}(\Sigma)$.
\end{definition}
\subsection{Symmetric group conventions}
\label{sec:Sn}
 We recall some standard notions and fix notation regarding the symmetric group $\SN$. For a permutation $w \in \SN$, we write $\lambda = \cycletyperep{N}$ to denote its cycle type, that is $w$ is the product of $\s{k} = \s{k}(w)  = \s{k}(\lambda)$ disjoint cycles of length $k$. 
 If $\sum_k k r_k  = N$ then we say $\lambda $ is a partition of $N$, denoted $\lambda \vdash N$. 
 When $w$ or $\lambda $ is fixed, we merely write $\s{k}$ to lighten notation. As  elements in $\SN$ are conjugate if and only if they have the same cycle type, each conjugacy class is characterised by the cycle type of a representative.
 Thus, we write $[w]$ or $\cycletype{N}$ to denote the conjugacy class with representative $w$, and we denote by $\cl(\SN)$ the set of conjugacy classes of $\SN$. When we need to choose a specific representative $w$ of the class $\cycletype{N}$, we will take for simplicity the natural one. That is, we write $w$ as a product of $r=\sum_j r_j(w)$ disjoint cycles by writing the numbers $1 2 \cdots N$ and inserting $r$ pairs of parenthesis in the obvious way.  See Example \ref{ex:cycletype} below. In \eqref{skein relations}, $\ell(w)$ denotes the Coxeter length of $w$. For $w$ of cycle type $k^{\s{}}$, its centraliser is isomorphic to the wreath product $\centraliser{w}{kr} \cong\cyclicgr{k} \wr S_{\s{}}$, which is defined as the semidirect product $\cyclicgr{k}^{\times \s{}} \rtimes S_{\s{}}$, where 
each copy of the cyclic group $\cyclicgr{k}$ has as generator a single $k$-cycle of $w$, and
 $S_{\s{}}$ acts by permuting these $\s{}$ copies of $\cyclicgr{k}$. For a general element $w$ of cycle type $\cycletyperep{N}$, its centraliser satisfies $\centraliser{w}{N} \cong \prod_{k=1}^{N}\cyclicgr{k} \wr S_{\s{k}}$.
\begin{example}
\label{ex:cycletype}
    The conjugacy class $[1^3 3^2]$ in $S_9$ has $r_1=3, r_3=2$ and $r_j=0$ otherwise. We have not written the  parts $j^0$ when $r_j=0$. This conjugacy class has natural representative given by $w=(1)(2)(3)(4,5,6)(7,8,9)$.    The centraliser  of $w$ in $S_9$ is isomorphic to $S_3 \times (\cyclicgr{3} \wr S_2)$, generated by 
    $\{ \underbrace{(1,2), (2,3)}_{S_3}, \underbrace{(4,5,6)}_{C_3}, \underbrace{(7,8,9)}_{C_3}, \underbrace{(4,7)(5,8)(6,9)}_{S_2}\}$.
\\
    In the notation of Theorem \ref{thm:result general element} below, 
    $w_1 = (1)(2)(3)$ and  $w_3 = (4,5,6)(7,8,9)$.
\end{example}
\subsection{Mapping torus and \texorpdfstring{$\SL{2}(\Z)$}{sl2Z}} \label{sec: mapping torus}
A standard reference for this section is \cite{Hat23}.
\begin{definition}
    The \defterm{mapping torus} of diffeomorphism $\gamma$ of the $2$-torus $T^2$ is defined by
    \begin{equation*}
        M_\gamma := \quotient{T^2\times I}{(x,0)\sim (\gamma(x),1)}.
    \end{equation*}
\end{definition}
Note that up to homeomorphism, $M_\gamma$ depends only on the representative in the mapping class group $\SL{2}(\Z)$, and that moreover $M_\gamma, M_{\gamma'}$, for $\gamma,\gamma' \in \SL{2}(\Z)$ are orientation-preserving diffeomorphic if and only if $\gamma$ and $\gamma'$ are conjugate in $\SLZ$. 

Recall that the group $\SL{2}(\Z)$ can be generated by the matrices  $S=\begin{pmatrix}
    0&-1\\
    1&0
\end{pmatrix}$ and $ T=\begin{pmatrix}
    1&1\\
    0&1
\end{pmatrix}$.
With respect to these generators, $\SLZ$ has corresponding presentation
\begin{equation*}
    \SL{2}(\Z)=\langle S,T\mid S^4 = 1, (ST)^3=S^2\rangle.
\end{equation*}

\begin{definition}
    For $\gamma \in \SL{2}(\Z)$, we say that $\gamma$ is \defterm{hyperbolic} if
    \begin{equation*}
        |\tr(\gamma)|>2.
    \end{equation*}
\end{definition}

If $\gamma$ is not hyperbolic then  it  is conjugate in $\SL{2}(\Z)$ to one of the following:

\begin{alignat*}{5}
&\begin{pmatrix}
    1 & 0 \\
    0 & 1
\end{pmatrix} & = & \, \Id 
&\quad& \ord(\gamma) &=& 1,\;\; \tr(\gamma)=2 \\[6pt]
&\begin{pmatrix}
    -1 & 0 \\
    0 & -1
\end{pmatrix} & = & -\Id 
&\quad& \ord(\gamma) &=& 2,\;\; \tr(\gamma)=-2 \\[6pt]
&\begin{pmatrix}
    -1 & 1 \\
    -1 & 0
\end{pmatrix} & = & -TS \;\text{or}\; 
\begin{pmatrix}
    0 & -1 \\
    1 & -1
\end{pmatrix} = -(TS)^{-1} 
&\quad& \ord(\gamma) &=& 3,\;\; \tr(\gamma)=-1 \\[6pt]
&\begin{pmatrix}
    0 & \mp 1 \\
    \pm 1 & 0
\end{pmatrix} & = & \pm S 
&\quad& \ord(\gamma) &=& 4,\;\; \tr(\gamma)=0 \\[6pt]
&\begin{pmatrix}
    1 & -1 \\
    1 & 0
\end{pmatrix} & = & \, TS \;\text{or}\; 
\begin{pmatrix}
    0 & 1 \\
    -1 & 1
\end{pmatrix} = (TS)^{-1} 
&\quad& \ord(\gamma) &=& 6,\;\; \tr(\gamma)=1 \\[6pt]
&\begin{pmatrix}
    1 & m \\
    0 & 1
\end{pmatrix} & = & \, T^m,\quad m\in\Z\setminus\{0\} 
&\quad& \ord(\gamma) &=& \infty,\;\; \tr(\gamma)=2 \\[6pt]
&\begin{pmatrix}
    -1 & -m \\
    0 & -1
\end{pmatrix} & = & -T^m,\quad m\in\Z\setminus\{0\} 
&\quad& \ord(\gamma) &=& \infty,\;\; \tr(\gamma)=-2.
\end{alignat*}

See Figure \ref{fig:overview table} for an overview of our results for all possible conjugacy classes of $\gamma\in\SL{2}(\Z)$. 
Note that above we have written  $\Id$ for $\Id_2$ and from now on, if not specified, $\Id=\Id_2$.
\subsection{Skein module of the mapping torus via Hochschild homology}
\label{sec:hochschild}
We recall the definition of the Hochschild homology of a $\K$-linear category $\mathcal{C}$. It is given by the coend of the $\Hom$-bifunctor,
\begin{align*}
    \HH(\mathcal{C}) = \int^{x \in \mathcal{C}} \Hom(x,x) = \left( \bigoplus_{x \in \mathcal{C}} \mathrm{Hom}(x, x) \right) 
\Big/ 
\left( f \circ g - g \circ f \,\middle|\, g : x \to y, f : y \to x \text{ for } x, y\in \mathcal{C} \right).
\end{align*}
In the case where $\mathcal{C}$ is the skein category $\SkCat{\mathcal{A}}(\Sigma)$ for  a closed oriented surface $\Sigma$, it was proven in \cite{GJS23} that there exists a canonical isomorphism of vector spaces
\begin{align}
\label{eq:skein module is HH0}
\SkMod{\cA}(\Sigma \times S^1) \cong \HH(\SkCat{\cA}(\Sigma)).
\end{align}
 In \cite{Kinnear24}, this was extended to  mapping tori $\Sigma \times_{\gamma} S^1$. A diffeomorphism of $\Sigma$ acts on the objects and morphisms of the skein category by acting on $\Sigma \times I$ as $\gamma \times \Id$. This allows for a twisted version of Hochschild homology,
\begin{align*}
    \HH^\gamma(\mathcal{C})&\cong 
\int^{x \in \mathcal{C}} \Hom(\gamma(x), x)\\
&= \left( \bigoplus_{x \in \mathcal{C}} \Hom(\gamma(x), x) \right) 
\Big/ \left( f \circ g - g \circ \gamma(f) \ \middle| \ f : y \to x,\, g : \gamma(x) \to y \text{ for }  x,y \in \mathcal{C}\right).
\end{align*}
In a similar manner one obtains the following isomorphism
\begin{equation*}
    \SkMod{\cA}(\Sigma \times_{\gamma} I) \cong \HH^\gamma(\SkCat{\cA}(\Sigma)),
\end{equation*}
which reduces to \eqref{eq:skein module is HH0}, when $\gamma = \Id$.

\subsection{Hochschild homology of the skein category via the  quantum torus}
\label{sec: twisted homology}
The skein algebra $\SkAlg{N}(T^2)$ arises as the endomorphism algebra of the empty skein object of the skein category. Hence, the skein algebra (considered as category with one object) embeds into the skein category $\SkCat{\GL{N}}(T^2)$.  In \cite{GJV24}, it was shown that this inclusion induces a Morita equivalence. Since Hochschild homology is a Morita invariant, one obtains the isomorphism, 
\begin{align*}
    \Sk{N}(M_\gamma) \cong \HH^\gamma(\SkAlg{N}(T^2)).
\end{align*}
In \cite{GJV23}, a further isomorphism was given,
\begin{align}\label{eq:skeinalg is A}
    \SkAlg{N}(T^2) \cong (A_N)^{\SN},
\end{align}
where $A_N$ denotes the algebra, also known as the quantum torus, defined as follows.

Let $\bullet$ denote  the dot product on $\Z^{\oplus N}$.
    Let $\omega$ be the nondegenerate skew pairing on $\lattice{N}$ given by
\[\omega(\mathbf{u},\mathbf{u'})=\dotp{(j_1,\ldots,j_N)}{(\ell_1',\ldots,\ell_N')}-\dotp{(\ell_1,\ldots,\ell_N)}{(j_1',\ldots,j_N')},\]
for $\mathbf{u}=(j_1,\ell_1\ldots,j_N,\ell_N)$, $\mathbf{u'}=(j_1',\ell_1'\ldots,j_N',\ell_N')$ in $\lattice{N}$.
    
\begin{definition}
    \label{def:algebra A}
    Let $A_N=\K_{\omega}[\lattice{N}]$ denote the twisted group algebra, whose underlying vector space is the $\K$-span of symbols ${m_\mathbf{u}}$, for  $\mathbf{u}\in\lattice{N}$,
     and with multiplication
    \begin{align*}
        m_{\mathbf{u}}m_{\mathbf{v}}=q^{\frac{1}{2}\omega(\mathbf{u},\mathbf{v})}m_{\mathbf{u+v}}.
    \end{align*}

  \end{definition}     
    
    The group $S_N$ acts diagonally on $m_{\mathbf{u}}\in A_N$ by permuting the $j_i$'s and $\ell_i$'s simultaneously in $\mathbf{u}=(j_1,\ell_1\ldots,j_N,\ell_N)$, i.e., $w(m_{\mathbf{u}})=m_{w(\mathbf{u})}$ for $w \in S_N$. Note that our skew pairing $\omega$ is $\SN$-invariant. For $w\in S_N$, $\gamma \in \SL{2}(\Z)$  one can define their conjoint action on $A_N$ by multiplication by the tensor product $w\otimes \gamma$ on $\Z^{\oplus N} \otimes_{\Z} \Z^{\oplus 2}  \simeq  \lattice{N}$.

When $N$ is clear from context, to lighten notation we merely write $A$ for $A_N$. 

\begin{remark}\label{rem:N2 vs 2N}
    While one may prefer to consider $\gamma \otimes w$ acting on $\Z^{\oplus 2} \otimes_{\Z} \Z^{\oplus N} $ which in turn is more naturally identified with  $ \Z^{\oplus N} \oplus \Z^{\oplus N}$, the calculations performed in Section \ref{sec:Cokernel} were simpler to execute viewing $w\otimes \gamma$ acting on $\lattice{N}$.  Thus, we also label the generators of $A_N$ accordingly.
\end{remark}
In \cite{GJV24}, and then in \cite{Kinnear24} for the twisted version, the  authors continue their study and employ the isomorphism
\begin{align}
\label{eq-twisted}
    \HH^{\gamma}(A^{\SN}) \cong \HH^{\gamma}(A\#{\SN}) \cong\bigoplus_{[w]\in \cl(\SN)} \HHA{\gamma}{w}_{\centraliser{w}{N}},
\end{align}
where the right hand side is defined as follows.
Given $w\in S_N$ and $\gamma \in \SL{2}(\Z)$, the action of $w\otimes \gamma$ allows us to define the $A$-bimodule $A_{w\gamma}$ having the same underlying space  $A$, on which $A$ acts on the left in the usual way, and on the right twisted by $w\otimes\gamma$. The $\powth{zero}$ Hochschild homology of $A$ with coefficients in the twisted bimodule $A_{w\gamma}$ is then given by 
\begin{equation}
\label{eqn:twisted HH0}
    \HHA{\gamma}{w}=\quotient{A_{w\gamma}}{\Commutator{w}{\gamma}},
\end{equation}
where we quotient by the \emph{subspace of twisted commutators} 
\begin{equation}
\label{eq:commutator relations}
\Commutator{w}{\gamma}:=\K\{m_{\mathbf{u}}m_{\mathbf{v}}-m_{\mathbf{v}}m_{w\otimes\gamma(\mathbf{u})} \mid \mathbf{u}, \mathbf{v} \in \lattice{N}\}.
\end{equation}

 The sum in \eqref{eq-twisted} is over conjugacy classes and the subscript $\centraliser{w}{N} $ indicates that we take coinvariants with respect to the centraliser of $w \in \SN$. Implicit in the statement of the isomorphism is the fact that the summands are independent of the choice of representative $w$.

\begin{notation}
    \label{not:basis of lattice}
    For $\gamma\in \SL{2}(\Z)$, $w\in \SN$, and $\mathbf{v} \in \lattice{N}$, we perform the same rescaling of generators of $\HHA{\gamma}{w}$ as in \cite{GJV24}:
    \begin{equation*}
        \widetilde{m}_{\mathbf{v}}:=q^{\frac{1}{2}\omega((\M{\gamma}{w}{N})^{-1}\mathbf{v},\mathbf{v})}m_{\mathbf{v}},
    \end{equation*}
which is well defined even when $\M{\gamma}{w}{N}$ is not invertible because we quotient by $\Commutator{w}{\gamma}$, see \cite{GJV24} or \cite{Kinnear24} for a proof.
\end{notation}

After rescaling, the commutator relations \eqref{eq:commutator relations} in $\HHA{\gamma}{w}$ are proportional to $\widetilde{m}_{\mathbf{u}}-\widetilde{m}_{\mathbf{u}+(w \otimes \gamma)\mathbf{v}}$,
hence as vector spaces we have the isomorphism
\begin{equation}
    \label{eq:dimension HH0 theory}
    \HHA{\gamma}{w} \cong \K[\coker(\M{\gamma}{w}{N})_{\tors}],
\end{equation}
where on the right hand side we have the group algebra of the torsion subgroup of the finitely generated  abelian group $\coker(\Id_{2N}-w\otimes \gamma)$.
\section{Dimension of the skein module}
\label{sec:dimension of the skein module}
In this section, we  compute the dimension of $\HHA{\gamma}{w}_{\centraliser{w}{N}}$ for $w\in \SN$. In order to do so, we will first use equation \eqref{eq:dimension HH0 theory} and compute the image of $\Mwg$ in $\lattice{N}$, and consequently its cokernel. 
In  the case where $w$ is a single $k$-cycle, we reduce the problem to identifying this cokernel as a quotient of $\Z^{\oplus 2}$ in Lemma \ref{lem:coker-equals-coker}.

Let us begin with a brief overview of the logical flow of this section.  Since any $\sigma\in\centraliser{w}{N}$ commutes with $\Mwg$, the quotient
 $\coker(\M{\gamma}{w}{N})_{\tors}$ carries an action of $\centraliser{w}{N}$. Furthermore, 
since the skew paring $\omega$ is $\SN$-invariant 
 we have that for all $\sigma\in\centraliser{w}{N}$, $\sigma(\widetilde{m}_{\mathbf{u}})=\widetilde{m}_{\sigma(\mathbf{u})}$ 
 in $\HHA{w}{\gamma}$. 
 This allows us in Theorem \ref{thm:result general element} to reduce the computation of coinvariants to one of orbit-counting, which becomes combinatorial.

In order to count orbits, we first study the action of the centraliser of $w$ on $\coker(\Mwg)_{\tors}$ in the case that $w$ is a single $k$-cycle. This reduces to understanding $\coker(\Id_2 - {\gamma}^{k})_{\tors}$ with $\gamma$ acting by multiplication. 
Finally, by counting the number of orbits of the action by $\centraliser{w}{N}$ for any $w$, we  give an explicit formula for the dimension of $\HHA{\gamma}{w}_{\centraliser{w}{N}}$ that depends only on  the cycle type of $w$ and the trace of $\gamma$ (see Remark \ref{rem:Chebyshev}).

\subsection{Twisted Hochschild homology expressed as a cokernel}
\label{sec:Cokernel}
The first step is to compute the image of $\M{\gamma}{w}{N}$ via column reduction.  We find we need only compute the image of the $2 \times 2$ matrices $\Id-\gamma^k$ for appropriate $k$. 
\begin{lemma}\label{lem:coker-equals-coker}
    Let $\gamma \in \SL{2}(\Z)$ and $w\in S_N$ have cycle type $\cycletyperep{N}$, then we have an isomorphism of abelian groups,
    \begin{equation*}
       \coker(\M{\gamma}{w}{N})
       \cong \bigoplus_{k=1}^N(\coker(\Mred{\gamma}{k}))^{\oplus \rkw}.
    \end{equation*}
    Consequently combining equation \eqref{eq:dimension HH0 theory} with taking torsion and linearizing, we have:
    \begin{equation*}
      \HHA{\gamma}{w} \cong \K[\coker(\M{\gamma}{w}{N})_{\tors}]
       \cong \K[\bigoplus_{k=1}^N(\coker(\Mred{\gamma}{k})_{\tors})^{\oplus \rkw}].
    \end{equation*}
\end{lemma}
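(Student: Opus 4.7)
The plan is to block-diagonalize $\Mgw = \Id_{2N} - w \otimes \gamma$ using the disjoint-cycle decomposition of $w$, and then perform a direct integer row/column reduction on each single-cycle block.

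\textbf{Step 1: Reduction to single cycles.} The partition of $\{1,\ldots,N\}$ into $w$-orbits yields a $\langle w\rangle$-equivariant splitting
$$\Z^{\oplus N} \;=\; \bigoplus_{k=1}^N (\Z^{\oplus k})^{\oplus \rkw},$$
where each $\Z^{\oplus k}$ summand carries the action of a single $k$-cycle $c_k$. Tensoring with $\Z^{\oplus 2}$ preserves this splitting and makes $w \otimes \gamma$ block-diagonal, so $\Mgw \cong \bigoplus_{k} (\Id_{2k} - c_k \otimes \gamma)^{\oplus \rkw}$. Since the cokernel functor commutes with finite direct sums, it suffices to prove $\coker(\Id_{2k} - c_k \otimes \gamma) \cong \coker(\Mred{\gamma}{k})$ for any single $k$-cycle $c_k$.

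\textbf{Step 2: Single-cycle computation.} Writing $\Z^{\oplus 2k} = \bigoplus_{i=1}^k e_i \otimes \Z^{\oplus 2}$ with $c_k \colon e_i \mapsto e_{i+1 \bmod k}$, the image of $\Id_{2k} - c_k \otimes \gamma$ is spanned by $e_i \otimes v - e_{i+1} \otimes \gamma v$ for $v \in \Z^{\oplus 2}$ and $i \in \Z/k\Z$, so in the cokernel we have the relations $e_{i+1} \otimes \gamma v \equiv e_i \otimes v$. Since $\gamma \in \SLZ$ is invertible over $\Z$, iterating these relations around the cycle identifies every class with one of the form $e_1 \otimes v$, and the loop-closing relation (from $i=k$) becomes $e_1 \otimes v \equiv e_1 \otimes \gamma^k v$. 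Identifying $e_1 \otimes \Z^{\oplus 2} \cong \Z^{\oplus 2}$ then yields $\coker(\Id_{2k} - c_k \otimes \gamma) \cong \Z^{\oplus 2}/(\Id_2 - \gamma^k)\Z^{\oplus 2} = \coker(\Mred{\gamma}{k})$, as required.

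\textbf{Step 3: Conclusion.} Combining Steps 1 and 2 gives the first isomorphism of the lemma. The second isomorphism then follows from \eqref{eq:dimension HH0 theory} together with the fact that both passing to the torsion subgroup and linearizing commute with finite direct sums. The main potential obstacle is Step 2: the iteration of relations around the cycle must be carried out \emph{integrally} rather than merely rationally, and it is exactly the $\Z$-invertibility of $\gamma$ as an element of $\SLZ$ that makes this legal. No subtlety arises in Step 3 since the torsion functor and group-algebra functor commute with finite direct sums.
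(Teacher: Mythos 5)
Your proposal is correct and takes essentially the same route as the paper: decompose $\M{\gamma}{w}{N}$ into blocks according to the disjoint cycles of $w$, then reduce each single $k$-cycle block to $\Mred{\gamma}{k}$, with the $\Z$-invertibility of $\gamma$ justifying the integral reduction. Your Step 2 generator-elimination around the cycle is just the paper's elementary column/row reduction of the block $P_k$ rephrased in terms of relations in the cokernel, so the two arguments coincide in substance.
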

\begin{proof}
We consider the cokernel of the $2N\times 2N$ matrix $\M{\gamma}{w}{N}$, with $w\in \SN$. Since only the conjugacy class of $w$ is relevant, we may choose the natural representative $w$ as discussed in Section \ref{sec:Sn}. Then the matrix $\M{\gamma}{w}{N}$ is a diagonal block matrix,
 \begin{align*}
    \M{\gamma}{w}{N} = \mathrm{ diag}(Q_{1,\s{1}},Q_{2,\s{2}}, \dots, Q_{n,\s{n}}),
\end{align*}
consisting of blocks $Q_{k,\s{k}}$ defined by
\begin{align}\label{Q matrix}
    Q_{k,\s{k}} := \mathrm{ diag}(\underbrace{P_k,P_k, \dots, P_k}_{\s{k}\text{ times}}), \quad
    P_k:=\M{\gamma}{\cycle{k}}{k}, \quad 1\leq k \leq n,
\end{align}
and $   P_1= \Id-\gamma$.
Hence, we have isomorphisms 
\begin{equation}
    \label{eq:direct sum coker}
    \coker(\M{\gamma}{w}{N})_{\tors} \cong \bigoplus_{k=1}^N \coker(Q_{k,\s{k}})_{\tors} \cong \bigoplus_{k=1}^N \coker(P_k)^{\oplus \s{k}}_{\tors}.
\end{equation}
We note that, for $1 < k \leq N,$ 
\begin{align}
    \label{eq: reduction of Pk}P_k=\begin{pmatrix}
        \Id & & & &-\gamma\\
        -\gamma &\ddots & & &\\
        &\ddots &\ddots & &\\
         & &\ddots &\Id &\\
         & & &-\gamma &\Id
    \end{pmatrix}&\colred\begin{pmatrix}
        \Id & & & &\\
        -\gamma &\ddots & & &\\
        &\ddots &\ddots & &\\
         & &\ddots &\Id &\\
         & & &-\gamma &\Mred{\gamma}{k}
    \end{pmatrix}\\
    &\rowred \begin{pmatrix}\nonumber
        \Id & & & &\\
         &\Id & & &\\
         & &\ddots & &\\
         & & &\Id &\\
         & & & &\Mred{\gamma}{k}
    \end{pmatrix},
\end{align}
where the first and second equivalences are obtained, respectively, by elementary column and row operations. Hence, there are isomorphisms $\coker(P_k)\cong \coker(\Mred{\gamma}{k})$ and $\coker(P_k)_{\tors}\cong \coker(\Mred{\gamma}{k})_{\tors}$, with $P_k \in \End((\Z \oplus \Z)^{\oplus k}),$  $ \Id-\gamma^k \in \End(\Z \oplus \Z)$. The lemma follows. 
\end{proof}

\subsection{Computing coinvariants of the twisted Hochschild homology via orbit counting}
\label{sec:Coinvariants for k-cycle}

We now study the action of the centraliser $\centraliser{w}{N}$ on $\K[\coker(\M{\gamma}{w}{N})_{\tors}]$. Calculating the dimension of the space of coinvariants with respect to this action amounts to counting the number of orbits of this action on the finite \emph{set} $\coker(\M{\gamma}{w}{N})_{\tors}$ (which is also a finite abelian group). In Theorem \ref{thm:result general element}, we prove that the general case can be reduced to the case where $w=\cycle{k}$ is a single cycle. Then, we will calculate the dimension for one $k$-cycle, i.e., when $\rjw=\delta_{j,k}$, and conclude the result for general $w\in \SN$.
\begin{notation} \label{not:ck}
    We define the positive integers $c_k(\gamma)$ to be  the number of orbits of $\centraliser{\cycle{k}}{k}$ acting on $\coker(\M{\gamma}{\cycle{k}}{k})_{\tors}$. 
\end{notation}
When $\gamma$ is fixed, to lighten notation we will often write $c_k$ for $\ckg$. 
\begin{theorem}\label{thm:result general element}
Let $w$ be an element of $\SN$ of cycle type $\cycletyperep{N}$ and $\gamma \in \SL{2}(\mathbb{Z})$. Then
\begin{align*}
\dim(\HHA{\gamma}{w}_{\centraliser{w}{N}}) = \prod_{k=1}^N\binom{\ckg + \rkw-1}{\rkw}.\end{align*} 
\end{theorem}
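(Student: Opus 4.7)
The plan is to translate the theorem into a pure orbit-counting problem on a finite set, then evaluate that count using the wreath-product structure of the centraliser. First, by Lemma \ref{lem:coker-equals-coker}, setting $Y_k := \coker(\Mred{\gamma}{k})_{\tors}$, we have $\HHA{\gamma}{w} \cong \K[X]$ where $X := \bigoplus_{k=1}^{N} Y_k^{\oplus \rkw}$. Since $\omega$ is $\SN$-invariant, each $\sigma \in \centraliser{w}{N}$ acts on $\HHA{\gamma}{w}$ via $\sigma(\widetilde{m}_{\mathbf{u}}) = \widetilde{m}_{\sigma(\mathbf{u})}$, i.e., purely as the $\K$-linearisation of a permutation action on the basis $X$. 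Consequently the dimension of the coinvariants equals the number of orbits $|X/\centraliser{w}{N}|$, and the problem becomes combinatorial.

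Next I would invoke the decomposition $\centraliser{w}{N} \cong \prod_{k=1}^{N} (C_k \wr S_{\rkw})$ recalled in Section \ref{sec:Sn} and verify that it is compatible with the block decomposition of $\Mwg$ used in the proof of Lemma \ref{lem:coker-equals-coker}. Concretely, after fixing the natural cycle-ordered representative of $w$, the $k$-th factor $C_k \wr S_{\rkw}$ preserves the summand $Y_k^{\oplus \rkw}$ and acts trivially on the others, with each of the $\rkw$ copies of $C_k$ acting on a distinct copy of $Y_k$ exactly as in the definition of $\ckg$ (Notation \ref{not:ck}), and with $S_{\rkw}$ permuting those copies. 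The orbit count then factors as a product over $k$, reducing the theorem to evaluating $|Y_k^{\oplus \rkw}/(C_k \wr S_{\rkw})|$ for each $k$.

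For a fixed $k$, the base subgroup $C_k^{\times \rkw}$ acts coordinate-wise on $Y_k^{\oplus \rkw}$, so its orbit set is canonically $(Y_k/C_k)^{\rkw}$, of cardinality $\ckg^{\rkw}$ by definition of $\ckg$. The residual $S_{\rkw}$-action on $(Y_k/C_k)^{\rkw}$ is the standard coordinate permutation, whose orbits are in bijection with multisets of size $\rkw$ drawn from a $\ckg$-element set; there are $\binom{\ckg + \rkw - 1}{\rkw}$ such multisets, and multiplying over $k$ delivers the claimed formula. The only genuinely non-routine step I anticipate is the block-compatibility check in the second paragraph: this becomes transparent once the basis of $\lattice{N}$ is ordered according to which cycle of $w$ each index belongs to, since then both $\Mwg$ and every element of $\centraliser{w}{N}$ respect the induced partition of $\{1,\ldots,N\}$, and no additional cross-block interactions arise.
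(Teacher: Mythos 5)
Your proposal is correct and follows essentially the same route as the paper's own proof: reduce the coinvariant dimension to orbit counting for the permutation action on $\coker(\M{\gamma}{w}{N})_{\tors}$, factor both the cokernel and the centraliser $\prod_{k}\cyclicgr{k}\wr S_{\rkw}$ over cycle lengths via the block structure of the natural representative, and count $\cyclicgr{k}\wr S_{\s{k}}$-orbits as size-$\s{k}$ multisets drawn from the $\ckg$-element orbit set, giving $\binom{\ckg+\rkw-1}{\rkw}$. The block-compatibility check you flag is exactly the point the paper handles by choosing the natural cycle-ordered representative, so no further work is needed.
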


Theorem \ref{thm:result general element} immediately implies the form of the Euler expansion in Theorem \ref{main result}, which we recall in the following corollary.

\begin{corollary} \label{cor:main}
We have an equality,
\begin{align*}
        \pf_{M_\gamma}(t) = \prod_{k=1}^\infty (1-t^k)^{-c_k(\gamma)}.
    \end{align*}    
\end{corollary}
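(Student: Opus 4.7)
The plan is that Corollary \ref{cor:main} is essentially a formal generating-function manipulation applied to the dimension formula in Theorem \ref{thm:result general element}. The work has already been done; what remains is packaging.

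First, I would combine the isomorphism \eqref{eq-twisted} with the Morita-invariance statement $\Sk{N}(M_\gamma)\cong \HH^\gamma(\SkAlgA_{\GLN}(T^2))\cong \HH^\gamma(A_N\#\SN)$ to get
\begin{equation*}
    \dim\Sk{N}(M_\gamma) \;=\; \sum_{[w]\in\cl(\SN)} \dim\HHA{\gamma}{w}_{\centraliser{w}{N}}.
\end{equation*}
Since conjugacy classes of $\SN$ are in bijection with partitions $\lambda\vdash N$ via their cycle type $\cycletyperep{N}$, and since Theorem \ref{thm:result general element} gives the dimension of each summand purely in terms of the multiplicities $r_k = r_k(\lambda)$, this becomes
\begin{equation*}
    \dim\Sk{N}(M_\gamma) \;=\; \sum_{\substack{(r_1,r_2,\ldots)\\ \sum_k k r_k = N}} \; \prod_{k=1}^{N} \binom{\ckg + r_k - 1}{r_k}.
\end{equation*}

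Second, I would multiply by $t^N$ and sum over $N\geq 0$. Because $\sum_k k r_k = N$ exactly packages $t^N = \prod_k t^{kr_k}$, the constraint on $N$ disappears and the sum factorises over $k$:
\begin{equation*}
    \pf_{M_\gamma}(t) \;=\; \sum_{(r_1,r_2,\ldots)}\,\prod_{k\geq 1}\binom{\ckg + r_k - 1}{r_k}\,t^{kr_k} \;=\; \prod_{k\geq 1}\;\sum_{r\geq 0}\binom{\ckg + r - 1}{r}\,t^{kr}.
\end{equation*}
The interchange is legitimate because we are working with formal power series and, for each fixed $N$, only finitely many tuples $(r_k)$ contribute to the coefficient of $t^N$.

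Third, I would apply the negative binomial identity $\sum_{r\geq 0}\binom{c+r-1}{r}x^r = (1-x)^{-c}$ with $x = t^k$ and $c = \ckg$ to each factor, yielding
\begin{equation*}
    \pf_{M_\gamma}(t) \;=\; \prod_{k\geq 1} (1-t^k)^{-\ckg},
\end{equation*}
which is the claimed Euler product. The empty-partition case $N=0$ matches the constant term $1$ in the definition of $\pf_{M_\gamma}$ (the empty product of binomial coefficients is $1$), so no separate bookkeeping is needed at low $N$.

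There is no real obstacle here: the entire content of Theorem \ref{main result} is hidden inside Theorem \ref{thm:result general element} and Notation \ref{not:ck}. The corollary just records that the particular form $\prod_k \binom{\ckg+r_k-1}{r_k}$ of the dimension formula is precisely what is needed for the sum over partitions to factor into an Euler product, with the multiplicities $\ckg$ playing the role of the exponents. The specific evaluation of $\ckg$ asserted in Theorem \ref{main result} is established separately by the orbit-counting arguments of Section \ref{sec:Coinvariants for k-cycle} applied to the single-cycle case $w = \cycle{k}$.
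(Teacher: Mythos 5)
Your proposal is correct and follows essentially the same route as the paper: expressing $\dim\Sk{N}(M_\gamma)$ as a sum over conjugacy classes (equivalently partitions) of the binomial products from Theorem \ref{thm:result general element}, and matching this against the expansion of $\prod_k(1-t^k)^{-c_k(\gamma)}$ via the negative binomial series. The only cosmetic difference is that you factorise the generating function directly while the paper expands the Euler product and compares coefficients, which is the same computation read in the opposite direction.
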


\begin{proof}

By Theorem \ref{thm:result general element} we may write the coefficients $a_N$ of $\pf_{M_\gamma}(t)$ as 
\begin{align*}
    a_N 
    =
    \sum_{[w] \in \cl(\SN)} \prod_{k=1}^{N} \binom{c_k(\gamma) + \s{k}(w) -1}{\s{k}(w)}.
\end{align*}

On the other hand, we may directly compute,
\begin{align*}
\prod_{k=1}^\infty (1-t^k)^{-c_k(\gamma)} &=
\prod_{k=1}^\infty \left(1+ \binom{c_k(\gamma)}{1}t^k + \binom{c_k(\gamma) +1}{2}t^{2k}+ \ldots \right)\\
&= 1+ \sum_{N=1}^\infty \sum_{\lambda \vdash N} \prod_{k=1}^{N} \binom{c_k(\gamma) + \s{k}(\lambda) -1}{\s{k}(\lambda)} \,t^N
\end{align*}
Recalling the bijection between partitions of $N$ and conjugacy classes of the symmetric group $\SN$, the formula \ref{eq:ZM} follows. 
\end{proof}

Before giving the proof of Theorem \ref{thm:result general element}, let us state the second main result of this section, Theorem \ref{thm:formula-for-ck} which gives a formula for the $\ckg$. The proof of the latter will take up the remainder of this section.

\begin{theorem}\label{thm:formula-for-ck}
    Suppose that  $\tr(\gamma)\neq\pm 2$, and $\gamma^k \neq \Id$.  Then the following formula holds:
\begin{equation}
\label{eq:ck}
        \ckg = \frac{1}{k}\sum_{ d|k}  \phi\left(\frac{k}{d}\right) |\tr(\gamma^d) -2|.
\end{equation}
The remaining special cases occur when $\gamma$ is conjugate to  a shear $T^m$, its negative $-T^m$, or when $\gamma$ is of finite-order with $\gamma^k=\Id$. 
In those cases, we have the following expressions,
    \begin{equation*}
    \ckg = \begin{cases}
        1,& \text{if } \gamma^k = \Id,\\
        |m|k,& \text{if $\gamma$ is conjugate to $T^m$ for $m\neq 0$,}\\
        |m|\frac{k}{2}+1, & \text{if $\gamma$ is conjugate to $-T^m$ for $m\neq 0$ , and $k$ is even,}\\
        4, & \text{if $\gamma$ is conjugate to $-T^m$ and $k$ is odd.}       

    \end{cases}
\end{equation*}
\end{theorem}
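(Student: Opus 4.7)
The plan is to identify the $C_k = \centraliser{\cycle{k}}{k}$ action on $G := \coker(\Id - \gamma^k)_{\tors}$ as multiplication by $\gamma$, apply Cauchy--Frobenius (Burnside) to express $\ckg$ as a sum over divisors of $k$, and then compute each fixed-point count either via a factorization argument in the generic case or by direct Smith normal form in the special cases.

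First, I would unpack the isomorphism $\coker(\M{\gamma}{\cycle{k}}{k}) \cong \coker(\Id - \gamma^k)$ from Lemma \ref{lem:coker-equals-coker} and verify that the generator of $C_k$, which cyclically permutes the $k$ copies of $\Z^{\oplus 2}$ in $\lattice{k}$, descends to multiplication by $\gamma^{\pm 1}$ on $\coker(\Id - \gamma^k)$. Concretely, the map $(u_1,\ldots,u_k) \mapsto u_1 + \gamma^{-1} u_2 + \cdots + \gamma^{-(k-1)} u_k$ intertwines cyclic shift with $\gamma^{-1}$-multiplication modulo $(\Id - \gamma^k)\Z^{\oplus 2}$. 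Consequently $\ckg$ is the orbit count of the cyclic group $\langle \gamma \rangle/\langle \gamma^k\rangle$ acting on the finite abelian group $G$, and grouping indices $j = 0, \ldots, k-1$ by $d = \gcd(j,k)$ (since $\gamma^j$ and $\gamma^d$ generate the same subgroup and hence share fixed points), Cauchy--Frobenius gives
\begin{equation*}
\ckg \;=\; \frac{1}{k}\sum_{d \mid k} \phi(k/d)\, |G^{\gamma^d}|.
\end{equation*}

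For the generic case $\tr(\gamma) \neq \pm 2$ and $\gamma^k \neq \Id$, the eigenvalues $\lambda, \lambda^{-1}$ of $\gamma$ are distinct and $\gamma^d \neq \Id$ for every $d \mid k$, so $\det(\Id - \gamma^d) = 2 - \tr(\gamma^d) \neq 0$. Using the commuting factorization $\Id - \gamma^k = (\Id - \gamma^d) N_d$ with $N_d := \sum_{i=0}^{k/d - 1}\gamma^{id}$, and observing that both factors are injective on $\Z^{\oplus 2}$ (since $\det N_d = (2-\tr(\gamma^k))/(2-\tr(\gamma^d))\neq 0$), one shows that $\overline{x} \in G$ is fixed by $\gamma^d$ iff $x \in N_d \Z^{\oplus 2}$, and hence
\begin{equation*}
G^{\gamma^d} \;\cong\; N_d\Z^{\oplus 2}/N_d(\Id - \gamma^d)\Z^{\oplus 2} \;\cong\; \Z^{\oplus 2}/(\Id - \gamma^d)\Z^{\oplus 2},
\end{equation*}
whose order is $|2 - \tr(\gamma^d)|$, yielding formula \eqref{eq:ck}.

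For the four special cases I would compute $G$ and the $\gamma$-action directly. When $\gamma^k = \Id$, $\Id - \gamma^k = 0$, so $G = 0$ and $\ckg = 1$. When $\gamma = T^m$ with $m\neq 0$, Smith normal form of $\Id - T^{mk}$ gives $G = \Z/|mk|\Z$, on which $T^m$ acts trivially, so $\ckg = |mk|$. When $\gamma = -T^m$ with $k$ even, $\gamma^k = T^{mk}$ so again $G = \Z/|mk|\Z$, but now $\gamma$ acts as $-\Id$; Cauchy--Frobenius for the induced $C_2$-action then gives $\ckg = (|mk|+2)/2 = |m|k/2+1$. When $\gamma = -T^m$ with $k$ odd, $\gamma^k = -T^{mk}$, so $\Id + T^{mk}$ has determinant $4$ and $|G| = 4$; a short direct check shows $\gamma$ acts trivially on $G$, since $(x,y) - \gamma(x,y) = (2x + my, 2y)$ lies in the image of $\Id + T^{mk}$ (using that $1-k$ is even), giving $\ckg = 4$. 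The main obstacle I anticipate is in the generic case: rigorously justifying the chain of isomorphisms identifying $G^{\gamma^d}$ with $\Z^{\oplus 2}/(\Id-\gamma^d)\Z^{\oplus 2}$, which requires carefully tracking torsion versus free parts and invoking the correct injectivity hypotheses on $N_d$ and $\Id - \gamma^d$, both of which rest on $\gamma^k \neq \Id$ combined with $\tr(\gamma) \neq \pm 2$.
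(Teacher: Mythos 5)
Your proposal is correct and follows essentially the same route as the paper: identify the centraliser action on $\coker(\Id-\gamma^k)_{\tors}$ with multiplication by a power of $\gamma$, apply Burnside's lemma grouped by $d=\gcd(j,k)$, count the fixed points of $\gamma^d$ as $|\det(\Id-\gamma^d)|=|\tr(\gamma^d)-2|$ in the nondegenerate case, and treat $\gamma^k=\Id$, $T^m$ and $-T^m$ by direct computation of the cokernel and the action. The only cosmetic differences are that you obtain the fixed-point count via the factorization $\Id-\gamma^k=(\Id-\gamma^d)N_d$ and injectivity, where the paper uses a lattice-index argument, and you settle the case of $-T^m$ with $k$ odd by checking the action on the order-four cokernel is trivial, where the paper simply reapplies its Burnside formula.
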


For completeness, we list the value of $c_k(\gamma)$ for all cases in Figure \ref{fig:overview table}.
\begingroup
\renewcommand{\arraystretch}{2}
\begin{figure}[h]
\begin{tabular}{|c| c|c|l|l|} 
 \hline
 $[\gamma]$ & $\tr(\gamma)$ &$\ord(\gamma)$& $\dim\HHA{\gamma}{\cycle{k}}$ & \makecell{$c_k$} \\ [0.5ex] 
 \hline\hline
 \makecell{$S$ or $S^3$} &$ 0$ &$4$ & \makecell[l]{$1 $ if $k \equiv 0 \mod 4$  \\  $2$ if $ k\equiv \pm 1 \mod 4$\\ $4 $ if $ k\equiv 2 \mod 4$} & \makecell[l]{$1 $ if $k \equiv 0 \mod 4$\\ $2$ if $ k\equiv \pm 1 \mod 4$ \\ $3 $ if $ k\equiv 2 \mod 4$} \\
 \hline
 \makecell{$TS$ or $(TS)^{-1}$} & $1$ & $6$&\makecell[l]{$ 1 $ if $k \equiv 0,\pm1 \mod 6$  \\ $ 4 $ if $ k \equiv 3 \mod 6$  \\ $ 3$ if $ k \equiv \pm 2 \mod 6$ }  &\makecell[l]{$ 1 $ if $k \equiv 0,\pm1 \mod 6$  \\ $ 2 $ if $ k \equiv \pm2,3 \mod 6$ }\\
 \hline
 \makecell{$-TS$ or $-(TS)^{-1}$} & $-1 $&$3$& \makecell[l]{$ 1 $ if $ k\equiv 0 \mod 3$  \\ $ 3$ if $ k\equiv \pm 1 \mod 3$} & \makecell[l]{$1 $ if $ k\equiv 0 \mod 3$  \\ $ 3$ if $ k\equiv \pm 1 \mod 3$}\\
 \hline
  \makecell{$\Id$} & $2 $& $ 1$ &$1$ & $1$ \\
 \hline
   \makecell{$-\Id$ }& $-2$ &$2$& \makecell[l]{$1$ if $k \equiv 0 \mod 2$ \\$4$ if $k \equiv 1 \mod 2$ } & \makecell[l]{$1$ if $k \equiv 0 \mod 2$ \\ $4$ if $k \equiv 1 \mod 2$} \\
 \hline
 \makecell{$T^m, \, m \neq 0$} &$ 2$ &$\infty$&  $|m|k$ & $|m|k$\\ 
 \hline
  \makecell{$-T^m, \,  m\neq 0$} & $-2$ &$\infty$ & \makecell[l]{$|m|k$ if $k\equiv 0 \mod 2$ \\$4$ \qquad if $k\equiv 1 \mod 2$ }& \makecell[l]{ $|m|\frac{k}{2}+1$ if $k\equiv 0 \mod 2$ \\ $4$ \qquad if $k\equiv 1 \mod 2$ }\\
 \hline
 hyperbolic  & $|\tr(\gamma)|>2$ &$\infty$&  $|\tr(\gamma^{k})-2|$ &$\frac{1}{k}\sum_{ d|k}  \phi\left(\frac{k}{d}\right) |\tr(\gamma^d) -2|$\\  
 \hline
\end{tabular}
\caption{Overview of the results for all possible conjugacy classes of $\gamma\in\SL{2}(\Z)$.}
\label{fig:overview table}
\end{figure}
\endgroup
\begin{proof}[Proof of Theorem \ref{thm:result general element}]
Since the space of coinvariants is defined by
\[\HHA{\gamma}{w}_{\centraliser{w}{N}}=\quotient{\HHA{\gamma}{w}}{\K\{ \widetilde{m}_{\mathbf{v}}-\widetilde{m}_{\sigma(\mathbf{v})}\mid \sigma \in \centraliser{w}{N},  \mathbf{v} \in \lattice{N}\}}\]
its dimension is equal to the number of orbits of the action of $\centraliser{w}{N}$ on $\coker(\M{\gamma}{w}{N})_{\tors}$.
 Let $w$ be the natural representative of $\cycletype{N}$ and $\gamma \in \SL{2}(\mathbb{Z})$ be fixed. Let $w_k$ denote the product of all the disjoint cycles of $w$ of length $k$, viewed as element of $ S_{k\s{k}}$. The centraliser of $w$ in $S_{n}$ satisfies  $\centraliser{w}{N} \cong \prod_{k=1}^N \centraliser{w_k}{k\s{k}}$. Furthermore, as in Section \ref{sec:Sn}, we have the isomorphism $\centraliser{w_k}{k\s{k}}\cong\cyclicgr{k} \wr S_{\s{k}}$. Recall the isomorphism \eqref{eq:direct sum coker} from the proof of Lemma \ref{lem:coker-equals-coker}, given by $\coker(\M{\gamma}{w}{N}) \cong \bigoplus_{k=1}^N \coker(Q_{k,\s{k}})$.  Note that
the centraliser $\centraliser{w}{N}$ only permutes entries from the cycles of $w$ that have the same lengths. Therefore, we can write
\begin{align*}
    \K[\coker(\M{\gamma}{w}{N})_{\tors}]_{\centraliser{w}{N}}   \cong \bigotimes_{k=1}^N \K[\coker(Q_{k,\s{k}})_{\tors}]_{\centraliser{w_k}{k\s{k}}}.
\end{align*} Alternatively, the set of orbits, denoted $\quotient{\coker(\M{\gamma}{w}{N})_{\tors}}{\centraliser{w}{N}}$, is in bijection with the Cartesian product $\prod_{k=1}^N\left(\quotient{\coker(Q_{k,\s{k}})_{\tors}}{\centraliser{w_k}{k\s{k}}}\right)$. 
We denote by $\quotient{\coker(P_k)_{\tors}}{\cyclicgr{k}}$ the \emph{set} of the $\cyclicgr{k}$-orbits on a single copy of $\coker(P_k)_{\tors}$ and recall from \ref{not:ck} we have denoted by $c_k$ the number of such orbits.  
We have a canonical bijection of sets,
\[
\quotient{\left(\coker(P_k)_{\tors}\right)^{\oplus \s{k}}}{\cyclicgr{k} \wr S_{\s{k}}} \cong \text{Sym}^{\s{k}}\left(\quotient{\coker(P_k)_{\tors}}{\cyclicgr{k}}\right)
\]
between the set of $\cyclicgr{k} \wr S_{\s{k}}$-orbits in $\left(\coker(P_k)_{\tors}\right)^{\oplus \s{k}}$ and the $\powth{r_k}$ symmetric product  of the set of $\cyclicgr{k}$-orbits in $\coker(P_k)_{\tors}$.
Thus, after a standard counting argument, one finds that the number of orbits is equal to $\binom{c_k + \s{k}-1}{\s{k}}$. Hence, the theorem follows.
\end{proof}
As a result of Theorem \ref{thm:result general element}, it suffices to study the action of $\centraliser{\cycle{k}}{k} = \langle \cycle{k}\rangle \cong \cyclicgr{k}$ on $\coker(P_k)_{\tors}$. We  carry out this analysis below.
Consider the case of one $k$-cycle, $w=\cycle{k} \in S_k$. We want to compute the free and torsion parts of $\coker(\Mred{\gamma}{k})$. If $\det(\Mred{\gamma}{k})\neq 0$, the free part is trivial, and we will be able to compute a formula for $\ckg$ using Burnside's formula to count orbits. For $\det(\Mred{\gamma}{k})= 0$, the free part is not trivial and we will discuss it case by case.

\begin{remark}
    \label{rmk:det(Id-gamma^k)}
    For $\gamma\in \SL{2}(\Z)$,
    $
        \det(\Mred{\gamma}{})=0
    $
    if and only if $\tr(\gamma)=2$.
    In particular, in this case
         $\gamma$ is conjugate to $T^\ell$ for some $\ell\in \Z$.  
         Consequently, for $k \in \Z_{>0}$, either
    \begin{enumerate}[label=\alph*)]
        \item \label{case:det non zero} $\det(\Mred{\gamma}{k})\neq 0$, \emph{or},
        \item \label{case:finite order dividing k} $\gamma$ is 
        of finite order dividing $k$, i.e.
        $\gamma^{k}=\Id$, \emph{or},
        \item \label{case:Tmk}$\gamma$ is 
        conjugate to $\pm T^m$, for some $m\in \Z$, hence $\gamma^k$ is conjugate to $ T^{mk}$.
    \end{enumerate}
    
\end{remark}
 In all the cases of Remark \ref{rmk:det(Id-gamma^k)}, we want to understand how the action of the centraliser $\centraliser{\cycle{k}}{k}$ on $\HHA{\gamma}{\cycle{k}}$ descends to an action on $\coker(\Mred{\gamma}{k})_{\tors}$. In the following lemma, taking  $w=\cycle{k}$, we regard $\K[\coker(\Mred{\gamma}{k})_{\tors}]$ as a module for $\centraliser{w}{k}$   via the surjective group homomorphism given by $w \mapsto \bar \gamma $
  where we have written $\bar \gamma$ for its coset in  $ \cycgp{\gamma}{k}$ and where $\gamma$, and hence $\bar \gamma$, acts on $\coker(\Mred{\gamma}{k})_{\tors}$ by multiplication.
\begin{lemma}\label{lem:action of centraliser}
For $\gamma \in \SL{2}(\Z)$, $w=\cycle{k} \in S_k$, the natural action of $\langle \gamma \rangle$ on $\coker(\Mred{\gamma}{k})$ yields that the isomorphism 
\[ \HHA{\gamma}{w} \cong \K[\coker(\Mred{\gamma}{k})_{\tors}],\]
of Lemma \ref{lem:coker-equals-coker} intertwines the natural $\centraliser{w}{k}$ action on the left hand side with the $\centraliser{w}{k}$ action on the right hand side as constructed above.
\end{lemma}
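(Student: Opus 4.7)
The plan is to make the isomorphism of Lemma \ref{lem:coker-equals-coker} explicit and then verify equivariance by direct computation. Following the column/row reductions in equation \eqref{eq: reduction of Pk}, I would exhibit a canonical $\Z$-linear surjection
\[
\phi\colon \lattice{k}\longrightarrow \coker(\Mred{\gamma}{k}),\qquad (v_1,\dots,v_k)\longmapsto \Bigl[\,\sum_{j=1}^{k} \gamma^{k-j}v_j\,\Bigr],
\]
and verify that $\ker(\phi)=\im(P_k)$ by evaluating $\phi$ on the columns of $P_k=\M{\gamma}{\cycle{k}}{k}$. Each of the first $k-1$ columns contributes $\gamma^{k-j}\cdot\Id-\gamma^{k-j-1}\cdot\gamma=0$, while the last column contributes $\gamma^{k-1}\cdot(-\gamma)+\Id = \Id-\gamma^k$, which is already zero in the target. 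Thus $\phi$ descends to the isomorphism $\coker(P_k)\cong\coker(\Mred{\gamma}{k})$ of Lemma \ref{lem:coker-equals-coker}, and in particular restricts to an isomorphism on torsion subgroups.

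With $\phi$ in hand, I would unpack the centraliser action on both sides. On the left, by construction of $\HHA{\gamma}{w}$ from Section \ref{sec: twisted homology}, the generator $w=\cycle{k}$ acts by $w\cdot \widetilde{m}_{\mathbf{u}} = \widetilde{m}_{w(\mathbf{u})}$, which under the identification $\lattice{k}\cong (\Z^{\oplus 2})^{\oplus k}$ is the cyclic shift $(v_1,\dots,v_k)\mapsto (v_k,v_1,\dots,v_{k-1})$. A short re-indexing yields
\[
\phi(w\cdot v) \;=\; \gamma^{k-1}v_k + \sum_{j=1}^{k-1}\gamma^{k-1-j}v_j \;\equiv\; \gamma^{-1}\cdot\phi(v) \qquad\text{in } \coker(\Mred{\gamma}{k}),
\]
using that $\gamma^k$ acts as the identity on $\coker(\Mred{\gamma}{k})$ so $\gamma^{k-1}=\gamma^{-1}$ there. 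Since $\gamma$ and $\gamma^{-1}$ generate the same cyclic subgroup $\cycgp{\gamma}{k}$, the action of $\langle w\rangle$ factors through the same quotient of $\langle\gamma\rangle$ as described in the lemma, and matches the action $w\mapsto\bar\gamma$ after the harmless identification of generators (equivalently, after replacing $\cycle{k}$ by its inverse, which also represents $\centraliser{w}{k}=\langle w\rangle\cong\cyclicgr{k}$). Preservation of the torsion subgroup is automatic, since multiplication by $\gamma$ is a $\Z$-linear automorphism of $\coker(\Mred{\gamma}{k})$.

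The main obstacle is purely bookkeeping: one must fix the direction of the cyclic shift on $\lattice{k}$ consistently with the convention $w\otimes\gamma$ used throughout, and then the explicit lift $\phi$ must be chosen so that the resulting homomorphism $\langle w\rangle\to\cycgp{\gamma}{k}$ is $w\mapsto\bar\gamma$ rather than its inverse. Once these conventions are pinned down the proof is a one-line re-indexing, valid uniformly in $\gamma$ regardless of whether $\gamma$ is hyperbolic, of finite order, or conjugate to a shear, so that Lemma \ref{lem:action of centraliser} may be used downstream in all of the case analyses of Theorem \ref{thm:formula-for-ck} without modification.
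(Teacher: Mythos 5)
Your proposal is correct and is essentially the paper's own argument: the paper works with representatives $\Zm{k}{\mathbf{v}}$ concentrated in the last slot and slides them along the cycle via the relations \eqref{eq:commutator relations} (giving $\Zm{k-r}{\mathbf{v}}=\Zm{k}{\gamma^{r}\mathbf{v}}$), which is your lift $\phi$ in disguise, and it likewise finds that $\cycle{k}$ acts through a generator of $\cycgp{\gamma}{k}$ (in fact through $\bar\gamma^{-1}$, so your remark that the choice of generator is harmless applies equally to the paper's own computation). The only small slip is that evaluating $\phi$ on the columns of $P_k$ proves only $\im(P_k)\subseteq\ker(\phi)$; the reverse inclusion (injectivity of the descended surjection) follows from the column reduction \eqref{eq: reduction of Pk} or from the explicit inverse $[u]\mapsto[(0,\dots,0,u)]$, a one-line fix.
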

\begin{proof}
    Starting with Notation \ref{not:basis of lattice}, for $\mathbf{v}=(j,\ell)\in \Z\oplus \Z$, we let  
    $\Zm{i}{\mathbf{v}}=\Zm{i}{(j,\ell)}:=\widetilde{m}_{\mathbf{v}_i}$, with $\mathbf{v}_i\in \lattice{k}$ having all components being $0$ except in the $i^{th}$ component, where it is equal to $\mathbf{v}$.
    $$ \mathbf{v}_i = (0,0, \dots, \underbrace{\mathbf{v}}_i,\dots, 0,0) = (0,0, \dots, 0,0, \underbrace{j,\ell}_i,0,0, \dots, 0,0).$$
    From the reduction (\ref{eq: reduction of Pk}), we know that $\HHA{\gamma}{w}$ is the $\K$-vector space spanned by monomials $\Zm{k}{\mathbf{v}}$, with $k$ as above, and $\mathbf{v}$ ranging over $\Z \oplus \Z$, quotiented by the subspace $\K\{ \Zm{k}{\mathbf{v}}-\Zm{k}{\gamma^k\mathbf{v}} \mid \mathbf{v} \in \Z \oplus \Z\}$.
    Now for $\cycle{k}^{k-r}\in \centraliser{\cycle{k}}{k} = \langle \cycle{k}\rangle\subset S_k$, with $0\leq r< k$, we obtain
    \begin{equation*}
        \cycle{k}^{k-r}(\Zm{k}{\mathbf{v}})=\Zm{k-r}{\mathbf{v}}.
    \end{equation*}
    Since we already had, from the cokernel relations \eqref{eq:commutator relations}, that
    \begin{equation*}
        \Zm{k-r}{\mathbf{v}}=\Zm{k-r+1}{\gamma\mathbf{v}}=\cdots=\Zm{k}{\gamma^{r}\mathbf{v}},
    \end{equation*}
    we now have that the action of the centraliser agrees with
    \begin{equation}\label{action of centraliser}
        \cycle{k}^{k-r}
        (\Zm{k}{\mathbf{v}})=\Zm{k}{\gamma^r\mathbf{v}}, \quad 0\leq r< k,
    \end{equation}
    and the lemma follows.
\end{proof}

Let us first discuss the case \ref{case:det non zero} of Remark \ref{rmk:det(Id-gamma^k)}, where $\det(\Mred{\gamma}{k})\neq 0$.
\begin{lemma}
    \label{lem:Number of fixed points}
    If $\det(\Mred{\gamma}{k})\neq 0$, then for $1\leq p <k$, the number of fixed points of $\gamma^p$ on $\coker(\Mred{\gamma}{k})_{\tors}$ is
    \begin{equation*}
        \#\coker(\Mred{\gamma}{k})_{\tors}^{\gamma^p}=|\det(\Mred{\gamma}{d})|.
    \end{equation*}
  with $d=\gcd(k,p)$ or $d = \gcd(k,p,m)$ in the case $\gamma$ has finite order $m$.
\end{lemma}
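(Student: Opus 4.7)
The plan is to compute $\#M^{\gamma^p}$ for $M := \coker(\Mred{\gamma}{k})$ (which, by the hypothesis $\det(\Mred{\gamma}{k})\neq 0$, is a finite abelian group coinciding with its torsion part) in two steps: first reduce the exponent to $d$, then count $M^{\gamma^d}$ via a kernel-equals-cokernel argument on the finite group $M$. Throughout, $\gamma$ acts on $M$ through its action on $\Z^2$, with $\gamma^k$ tautologically acting as the identity.

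For the reduction, set $d = \gcd(k,p)$; I claim $M^{\gamma^p} = M^{\gamma^d}$. The containment $M^{\gamma^d} \subseteq M^{\gamma^p}$ is immediate from $\gamma^p = (\gamma^d)^{p/d}$. For the reverse, pick $a,b \in \Z$ with $ak + bp = d$; since $\gamma^k$ acts as the identity on $M$, we have $\gamma^d = (\gamma^p)^b$ on $M$, so any $\gamma^p$-fixed vector is also $\gamma^d$-fixed. When $\gamma$ has finite order $m$, the additional relation $\gamma^m = \Id$ holds globally and hence on $M$, and the same B\'ezout argument applied to the triple $(k,p,m)$ yields $M^{\gamma^p} = M^{\gamma^{d'}}$ with $d' = \gcd(k,p,m)$. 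Both versions of $d$ produce the same numerical answer, since in the finite-order case $\det(\Mred{\gamma}{a})$ depends on $a$ only through $\gcd(a,m)$ --- a direct check using the finite orders $\{2,3,4,6\}$ realised in $\SL_2(\Z)$.

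For the counting step, $\gamma^d - \Id$ is an endomorphism of the finite abelian group $M$ with kernel $M^{\gamma^d}$, and on any finite abelian group the kernel and cokernel of a homomorphism have the same cardinality. Unfolding the cokernel,
\begin{equation*}
M \big/ (\gamma^d - \Id)M \;=\; \Z^2 \big/ \bigl((\gamma^d - \Id)\Z^2 + (\gamma^k - \Id)\Z^2 \bigr),
\end{equation*}
and writing $k = d\ell$, the factorisation $\gamma^k - \Id = (\gamma^d - \Id)(\Id + \gamma^d + \cdots + \gamma^{(\ell-1)d})$ inside the commutative ring $\Z[\gamma]$ forces $(\gamma^k - \Id)\Z^2 \subseteq (\gamma^d - \Id)\Z^2$. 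The cokernel therefore collapses to $\Z^2/(\Mred{\gamma}{d})\Z^2$, of order $|\det(\Mred{\gamma}{d})|$, which yields the claimed formula. The main technical point to check is that $\det(\Mred{\gamma}{d})$ is itself nonzero, so that the count is meaningful and the cokernel order is finite: if it vanished, then both eigenvalues of $\gamma^d$ would equal $1$ (since $\det \gamma^d = 1$), making $\gamma^d$ either the identity or conjugate to a nontrivial shear; raising to the $\ell$-th power would force the same for $\gamma^k$, contradicting the hypothesis.
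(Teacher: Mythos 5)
Your proposal is correct and follows essentially the same route as the paper: first reduce the exponent $p$ to $d=\gcd(k,p)$ (resp.\ $\gcd(k,p,m)$) using that $\gamma^k$ acts trivially on the cokernel, then count the fixed points of $\gamma^d$ as the kernel of $\Id-\gamma^d$ acting on the finite group $\coker(\Mred{\gamma}{k})$, exploiting the inclusion $\im(\Mred{\gamma}{k})\subseteq\im(\Mred{\gamma}{d})$ coming from $d\mid k$. The only cosmetic differences are that you phrase the reduction via B\'ezout rather than equality of cyclic subgroups, and you count via kernel-equals-cokernel cardinality (identifying the cokernel as $\Z^2/\im(\Mred{\gamma}{d})$) where the paper divides the order of the domain by that of the image; you also make explicit the nonvanishing of $\det(\Mred{\gamma}{d})$, which the paper leaves implicit.
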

\begin{proof}
    We consider $\gamma$ of infinite order, respectively finite order $m$.  Let $\bar \gamma^p$ denote the elements of the finite cyclic group $\cycgp{\gamma}{k} = \langle \bar \gamma \rangle$.    Note the order of $ \bar \gamma $ is $k$, respectively $\gcd(k,m)$.  
    We have an equality of the cyclic subgroups $\langle \bar \gamma^p \rangle = \langle \bar \gamma^d \rangle$ exactly when $\gcd(k,p) = \gcd(k, d)$, (respectively  $\gcd(k,p,m) = \gcd(k, d,m)$); and note that any fixed point of $\bar \gamma^p$ will also be a fixed point of $\bar \gamma^{rp}$ for $r \in \Z$.
    Thus we may replace $p$ with $d=\gcd(k,p)$ (respectively  with $d=\gcd(k,p,m)$), which divides $k$.
    When $\det(\Mred{\gamma}{k})\neq 0$, $\coker(\Mred{\gamma}{k})=\coker(\Mred{\gamma}{k})_{\tors}$ and $\Mred{\gamma}{k}$ and $\Mred{\gamma}{d}$  are both endomorphisms of $\Z\oplus\Z=:\Z^2$ with finite cokernel, i.e. whose images are of finite index. 
    Let $L=\im(\Mred{\gamma}{d})\subseteq \Z^2$ and $M=\im(\Mred{\gamma}{k})\subseteq \Z^2$. Because $d$ divides $k$  we have an inclusion of lattices $M\subseteq L\subseteq \Z^2$.  Observe $\coker(\Mred{\gamma}{k})= \mathbb{Z}^2/M$.  We compute the indexes 
    \begin{align*}
        &[\Z^2:L]=|\det(\Mred{\gamma}{d})|,\\
        &[\Z^2:M]=|\det(\Mred{\gamma}{k})|=[\Z^2:L][L:M].
    \end{align*}
    Multiplication by $\Mred{\gamma}{d}$ gives an endomorphism of $\mathbb{Z}^2/M$ with image $L/M$, and with kernel equal to fixed points of $\gamma^d$ acting on $\coker(\Mred{\gamma}{k})$. Hence its kernel has cardinality equal the ratio of the cardinalities of the domain and image, which is $|\det(\Mred{\gamma}{d})|$. This shows that the number of fixed points for $\gamma^d$, and in general for $\gamma^p$ with $\gcd(k,p)=d$, (respectively $\gcd(k,p,m)=d$) is $|\det(\Mred{\gamma}{d})|$.
\end{proof}

\begin{lemma}
    \label{lem:dim HH0 for gamma hyperbolic, k cycle}
    If $\det(\Mred{\gamma}{k})\neq 0$, then
    \begin{equation*}
            \ckg=\frac{1}{k}\sum_{d|k}\phi\left(\frac{k}{d}\right)|\tr(\gamma^d)-2|,
    \end{equation*}
    where $\phi$ is the Euler's totient function.
\end{lemma}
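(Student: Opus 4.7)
The plan is to apply Burnside's orbit-counting formula to the cyclic group $\centraliser{\cycle{k}}{k} = \langle \cycle{k}\rangle \cong \cyclicgr{k}$ acting on the finite set $\coker(\Mred{\gamma}{k})_{\tors}$. By Lemma \ref{lem:action of centraliser}, this action factors through the quotient $\cycgp{\gamma}{k}$, and the generator $\cycle{k}$ acts as multiplication by $\gamma$. Hence
\begin{equation*}
\ckg \;=\; \frac{1}{k}\sum_{p=1}^{k}\, \#\coker(\Mred{\gamma}{k})_{\tors}^{\gamma^p},
\end{equation*}
where for $p=k$ the element $\gamma^k$ acts as the identity on $\coker(\Mred{\gamma}{k})_{\tors}$ (since it is trivial on $\Z^2 / \im(\Mred{\gamma}{k})$).

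Next, I would invoke Lemma \ref{lem:Number of fixed points} to identify the fixed-point counts: for each $p$ with $1 \leq p < k$ the number of fixed points equals $|\det(\Mred{\gamma}{d})|$ with $d = \gcd(k,p)$, and for $p = k$ the whole cokernel is fixed, of cardinality $|\det(\Mred{\gamma}{k})|$, which fits the same formula with $d = \gcd(k,k) = k$. I would then rearrange the sum by grouping the integers $p \in \{1,\dots,k\}$ according to the value $d := \gcd(k,p)$, using the standard fact that the number of $p \in \{1,\dots,k\}$ with $\gcd(k,p) = d$ is exactly $\phi(k/d)$ for each divisor $d \mid k$. This yields
\begin{equation*}
\ckg \;=\; \frac{1}{k}\sum_{d \mid k} \phi\!\left(\tfrac{k}{d}\right) |\det(\Mred{\gamma}{d})|.
\end{equation*}

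Finally, I would translate determinants into traces using $\det(\gamma)=1$: for any $\gamma \in \SLZ$ and any integer $d$,
\begin{equation*}
\det(\Mred{\gamma}{d}) \;=\; 1 - \tr(\gamma^d) + \det(\gamma^d) \;=\; 2 - \tr(\gamma^d),
\end{equation*}
so $|\det(\Mred{\gamma}{d})| = |\tr(\gamma^d) - 2|$, giving the claimed formula. The only delicate point is justifying that Burnside applies cleanly even when the action of $\langle \cycle{k}\rangle$ factors through a proper quotient (for instance, if $\gamma$ has finite order $m$ with $\gcd(k,m) < k$); this is handled by the identity $\mathrm{Fix}(\cycle{k}^p) = \mathrm{Fix}(\gamma^p)$ combined with the reduction in Lemma \ref{lem:Number of fixed points} that replaces $p$ by $\gcd(k,p,m)$, which does not alter the aggregated sum above. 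I expect no serious obstacle beyond this bookkeeping, as the hard analytic content has been absorbed into Lemma \ref{lem:Number of fixed points}.
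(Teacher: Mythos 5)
Your proposal is correct and follows essentially the same route as the paper's proof: Burnside's lemma for the cyclic action (via Lemma \ref{lem:action of centraliser}), the fixed-point counts of Lemma \ref{lem:Number of fixed points}, grouping $p\in\{1,\dots,k\}$ by $d=\gcd(k,p)$ to produce the $\phi(k/d)$ factors, and the identity $\det(\Mred{\gamma}{d})=2-\tr(\gamma^d)$. Your treatment of the finite-order case (where the action factors through a quotient of $\cyclicgr{k}$ and Lemma \ref{lem:Number of fixed points} uses $\gcd(k,p,m)$) is asserted rather than fully verified, but this is exactly the same level of detail as the paper's own remark on that point.
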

\begin{proof}
By Burnside's Lemma,  the average number of fixed points  of $\cycgp{\gamma}{k}$ acting on $\coker(\Mred{\gamma}{k})_{\tors}$ is given by:
    \begin{gather*}
        \frac{1}{k}\sum_{p=1}^k \#\coker(\Mred{\gamma}{k})_{\tors}^{\gamma^p}
        =\frac{1}{k}\sum_{d|k}\phi\left(\frac{k}{d}\right)|\det(\Mred{\gamma}{d})|.
    \end{gather*}
    The equation follows from Lemma \ref{lem:Number of fixed points} and the definition of $\phi$. 
    We note that the formula holds even in the case that $\gamma$ has finite order $m$, noting our hypothesis precludes $m \mid k$. In this case it still counts the average number of fixed points of a group of size $\gcd(k,m)$ repeated $k/\gcd(k,m)$ times then normalised appropriately. Note that for $\gamma\in\SL{2}(\Z)$, $\det(\Mred{\gamma}{d})=2-\tr(\gamma^d)$, and the lemma follows.
\end{proof}

We are now ready to prove Theorem \ref{thm:formula-for-ck}.

\begin{proof}[Proof of Theorem \ref{thm:formula-for-ck}]
Note that when $\tr(\gamma) \neq \pm 2$, the matrix $\gamma$ is either hyperbolic or of finite order. As mentioned in Remark \ref{rmk:det(Id-gamma^k)}, when $\gamma$ is hyperbolic or of finite order not dividing $k$, it follows that $\det(\Mred{\gamma}{k}) \neq 0$. Hence, Lemma \ref{lem:dim HH0 for gamma hyperbolic, k cycle} applies.\\
\\
When $\gamma$ is of finite order dividing $k$, the matrix $\Mred{\gamma}{k}$ is equal to the zero matrix. Hence,  $\coker(\Mred{\gamma}{k})$ is free, $\coker(\Mred{\gamma}{k})_{\tors}$ is trivial and accordingly, there is only one orbit when acting by the centraliser of $w$. 
In conclusion, we obtain
\begin{align*}
\dim \HHA {\gamma}{\cycle{k}} = \dim \HHA{\gamma}{\cycle{k}}_{\centraliser{\cycle{k}}{k}}=1.
\end{align*}
When $\gamma$ is conjugate to either $T^m$ or $-T^m$ for $m \neq 0$, the computation is slightly more involved. Recall that for conjugate elements of the mapping class groups, their corresponding mapping tori are diffeomorphic. Hence, we can consider, without loss of generality, the cases where $\gamma$ is equal to either $T^m$ or $-T^m$. In the following, we use the notation from the proof of Lemma \ref{lem:action of centraliser}.\\
When $\gamma$ is equal to $T^m$, we have $\coker(\Mred{\gamma}{k})_{\tors}  \cong \{\Zm{k}{(i,0)}\mid 0\leq i < |m|k\}$ as sets. 
Each $ \Zm{k}{(i,0)}$ is fixed by $T$, hence by
equation $\eqref{action of centraliser}$  the action of the centraliser is trivial on $\HHA{\gamma}{\cycle{k}}$.  Hence, the number of orbits is the same as the number of elements. 
In other words, for each $\mathbf{v} \in \lattice{k}$,  $\Zm{\mathbf{v}}{} - \Zm{\cycle{k} \cdot \mathbf{v}}{}$ is in the span of the relations \eqref{eq:commutator relations}.
In conclusion,
\begin{align*}
\ck(T^m)= \dim \HHA{T^m}{\cycle{k}}_{\centraliser{\cycle{k}}{k}}=\dim \HHA{T^m}{\cycle{k}} = |m|k.
\end{align*}
When $\gamma$ is equal to $-T^m$, one needs to distinguish between the case where $k$ is odd or even.
When $k$ is odd, we have $\det(\Mred{\gamma}{k})= 4$, hence Lemma \ref{lem:dim HH0 for gamma hyperbolic, k cycle} applies. 

When $k$ is even, we have $\coker(\Mred{\gamma}{k})_{\tors} \cong \{\Zm{k}{(i,0)}\mid 0\leq i < |m|k\}$, as in the previous case. However, now the action by the centraliser $\centraliser{\cycle{k}}{k} = \langle (12 \cdots k) \rangle$ is non-trivial. Each $-T^m$ will send $\Zm{k}{(i,0)}$ to $\Zm{k}{(- i,0)}$, which is identified with $\Zm{k}{(|m|k-i,0)}$ modulo the relations \eqref{eq:commutator relations}.
 Hence, the centraliser has two singleton orbits  $\{\Zm{k}{(0,0)}\}$ and $\{\Zm{k}{(|m|\frac{k}{2},0)}\}$ and $|m|\frac{k}{2}-1$ orbits of the form $\{\Zm{k}{(i,0)},\Zm{k}{(|m|k-i,0)}\}$. In conclusion, for $k$ even, we obtain
\begin{align*}
\ck(-T^m) = \dim \HHA{(-T^m)}{\cycle{k}}_{\centraliser{\cycle{k}}{k}}=|m|\frac{k}{2}+1.
\end{align*}
\end{proof}
\begin{example} \label{ex:table of ck for various gamma}
    The skein module dimensions for $\GL{N}$ and the sequences $c_k$ are given below, for some representative matrices $\gamma$.
    See Examples \ref{ex:T}, \ref{ex:-T}, \ref{ex:-I}, \ref{ex:hyperbolic}, where we have packaged the data into skein partition functions.

\begin{table}[ht]
\centering \small
\begin{tabular}{c|l|l}
$\gamma$ & $c_k(\gamma)$ for $k = 1,2,3,\dots$ &  $\dim\Sk{N}(M_\gamma)$ for $N =1,2,3, \ldots$ \\[2pt]
\hline
$\begin{pmatrix}2 & 1\\[2pt] 3 & 2\end{pmatrix}$ & 
2,\;7,\;18,\;52,\;146,\;463,\;1442,\;4732,\;15618,\;\ldots & 
2,\;10,\;36,\;142,\;520,\;1980,\;7344,\;27550,\;102686,\;\ldots \\[4pt]

$\Id$ & 
1,\;1,\;1,\;1,\;1,\;1,\;1,\;1,\;1,\;1,\;1,\;\ldots & 
1,\;2,\;3,\;5,\;7,\;11,\;15,\;22,\;30,\;\ldots \\

$-\Id$ & 
4,\;1,\;4,\;1,\;4,\;1,\;4,\;1,\;4,\;1,\;4,\;\ldots & 
4,\;11,\;28,\;63,\;132,\;264,\;504,\;928,\;1660,\;\ldots \\

$S$ & 
2,\;3,\;2,\;1,\;2,\;3,\;2,\;1,\;2,\;3,\;2,\;\ldots & 
2,\;6,\;12,\;25,\;46,\;86,\;148,\;255,\;420,\;\ldots \\[4pt]

$TS$ & 
1,\;2,\;2,\;2,\;1,\;1,\;1,\;2,\;2,\;2,\;1,\;\ldots & 
1,\;3,\;5,\;10,\;15,\;27,\;40,\;66,\;97,\;\ldots \\[4pt]

$-TS$ & 
3,\;3,\;1,\;3,\;3,\;1,\;3,\;3,\;1,\;3,\;3,\;\ldots & 
3,\;9,\;20,\;45,\;90,\;176,\;324,\;585,\;1017,\;\ldots \\[4pt]

$T$ & 
1,\;2,\;3,\;4,\;5,\;6,\;7,\;8,\;9,\;\ldots & 
1,\;3,\;6,\;13,\;24,\;48,\;86,\;160,\;282,\;\ldots \\[4pt]

$-T$ & 
4,\;2,\;4,\;3,\;4,\;4,\;4,\;5,\;4,\;6,\;4,\;7,\;4,\;8,\;4,\ldots & 
4,\;12,\;32,\;77,\;172,\;366,\;744,\;1460,\;2780,\;\ldots \\[4pt]

$T^{2}$ & 
2,\;4,\;6,\;8,\;10,\;12,\;14,\;16,\;18,\;\ldots & 
2,\;7,\;18,\;47,\;110,\;258,\;568,\;1237,\;2600,\;\ldots \\[4pt]

\end{tabular}
\end{table}
\end{example}

\printbibliography
\end{document}